\documentclass[a4paper,12pt,oneside]{amsart}

\usepackage{a4wide}
\usepackage[utf8]{inputenc} 
\usepackage[T1]{fontenc} 
\usepackage{textcomp} 
\usepackage[english,french]{babel} 
\usepackage{amsmath}
\usepackage{amssymb}
\usepackage{amsthm}
\usepackage{graphicx}
\usepackage{url}
\usepackage{hyperref}
\usepackage[all]{xy}
\usepackage{nccmath}
\usepackage{tikz-cd}
\usepackage{color}
\usepackage{mathrsfs,xspace}
\usepackage{comment}
\usepackage{extarrows}
\usepackage{enumitem}
\usepackage{slashbox}
\usepackage[font=small,labelfont=bf]{caption}
\allowdisplaybreaks

\makeatletter
\newcommand*\bigcdot{\mathpalette\bigcdot@{.8}}
\newcommand*\bigcdot@[2]{\mathbin{\vcenter{\hbox{\scalebox{#2}{$\m@th#1\bullet$}}}}}
\makeatother

\DeclareMathOperator{\Z}{\mathbb{Z}}
\DeclareMathOperator{\C}{\mathbb{C}}

\DeclareMathOperator{\N}{\mathbb{N}}
\DeclareMathOperator{\RR}{R}
\DeclareMathOperator{\Hol}{\mathcal{O}}
\DeclareMathOperator{\CC}{\mathcal{C}}
\DeclareMathOperator{\HH}{\mathcal{H}}
\DeclareMathOperator{\Ind}{Ind}
\DeclareMathOperator{\BM}{BM}

\DeclareMathOperator{\Db}{\mathcal{D}\textit{b}}
\DeclareMathOperator{\supp}{supp}
\DeclareMathOperator{\PPP}{\mathbb{P}}

\theoremstyle{definition}
\newtheorem{definition}{Definition}[section]
\newtheorem{example}[definition]{Example}

\newtheorem{remark}[definition]{Remark}

\theoremstyle{plain}
\newtheorem{proposition}[definition]{Proposition}
\newtheorem{theorem}[definition]{Theorem}
\newtheorem{lemma}[definition]{Lemma}

\numberwithin{equation}{section}

\begin{document}

\selectlanguage{english}

\title{Holomorphic cohomological convolution and Hadamard product}
\author{Christophe Dubussy \quad Jean-Pierre Schneiders}
\date{\today}
\address{B\^at. B37 \\ Analyse alg\'{e}brique \\ Quartier Polytech 1 \\ All\'{e}e de la d\'{e}couverte 12 \\ 4000 Li\`{e}ge \\ Belgique.}
\email{C.Dubussy@uliege.be and jpschneiders@uliege.be}
\thanks{The first author is supported by a FNRS grant (ASP 111496F)}
\subjclass[2010]{Primary 44A35; Secondary 55N10, 55N30}
\keywords{Hadamard product, multiplicative convolution, singular homology}

\begin{abstract}
In this article, we explain the link between Pohlen's extended Hadamard product and the holomorphic cohomological convolution on $\C^*$. For this purpose, we introduce a generalized Hadamard product, which is defined even if the holomorphic functions do not vanish at infinity, as well as a notion of strongly convolvable sets.
\end{abstract}

\maketitle

\tableofcontents

\section{The extended Hadamard product}

Classically, the Hadamard product of two formal power series $A(z) = \sum_{n=0}^{+\infty}a_n z^n$ and $B(z) = \sum_{n=0}^{+\infty}b_n z^n$ is defined by setting

\[
(A\star B)(z) = \sum_{n=0}^{+\infty}a_n b_n z^n.
\]
Using Taylor expansions, one can thus define the Hadamard product $f_1\star f_2$ of two germs $f_1$ and $f_2$ of holomorphic functions at the origin. Exploiting the Cauchy's integral representation, one obtains the formula

\[
(f_1\star f_2)(z) = \frac{1}{2i\pi}\int_{C(0,r)^+} f_1(\zeta) f_2\left(\frac{z}{\zeta}\right) \frac{d\zeta}{\zeta}
\]
for all $z$ in a neighbordhood of $0$, $C(0,r)^+$ being a small positively oriented circle centered at the origin (see e.g. \cite{Avan74} and \cite{Mull92} for some applications).

\bigskip

In his thesis \cite{Pohl09} (see also \cite{Mull12}), Timo Pohlen introduced the more general notion of Hadamard product for holomorphic functions defined on open subsets of the Riemann sphere $\PPP = \C \cup \{\infty\}$ which do not necessarily contain the origin. This new definition led to interesting applications, (e.g. \cite{Lors15} and \cite{Mull10}). In this introduction, we shall recall the construction and the results of T. Pohlen.

\begin{definition}
Let $\PPP$ be the Riemann sphere equipped with its canonical structure of complex manifold. Let $\Omega$ be an open subset of $\PPP$. One sets

\[
\HH(\Omega) = \{f \in \Hol(\Omega) : f(\infty)=0\}
\]
if $\infty \in \Omega$ and $\HH(\Omega) = \Hol(\Omega)$ otherwise.
\end{definition}

\begin{definition}\label{def:extmult}
We set $M = (\PPP \times \PPP) \backslash \{(0,\infty),(\infty,0)\}$ and extend the complex multiplication continuously as a map $\cdot : M \to \PPP.$ We then have

\[
\infty \cdot a = a \cdot \infty = \infty
\]
if $a\in \PPP$ is not equal to zero. If $A,B$ are subsets of $\PPP$ such that $A \times B \subset M$, one sets

\[
A\cdot B = \{a\cdot b : a \in A, b \in B\}.
\]
One also extends the inversion $z \mapsto z^{-1}$ continuously from $\C^*$ to $\PPP$ by setting $0^{-1}=\infty$ and $\infty^{-1} =0.$ If $S \subset \PPP,$ one sets

\[
S^{-1} = \{z : z^{-1} \in S\}.
\]
\end{definition}

For the rest of the article, we shall often drop the point and write the multiplication  as a concatenation.

\begin{definition}\label{def:stareligible}
Two open subsets $\Omega_1, \Omega_2\subset \PPP$ are called \emph{star-eligible} if

\begin{enumerate}
\item $\Omega_1$ and $\Omega_2$ are proper subsets of $\PPP,$
\item $(\PPP \backslash \Omega_1) \times (\PPP \backslash \Omega_2) \subset M,$
\item $(\PPP \backslash \Omega_1)(\PPP \backslash \Omega_2) \neq \PPP.$
\end{enumerate}

\noindent
\index{Star product}In this case, the \emph{star product} of $\Omega_1$ and $\Omega_2$, noted $\Omega_1\star \Omega_2,$ is defined by

\[
\Omega_1 \star \Omega_2 = \PPP \backslash ((\PPP \backslash \Omega_1)(\PPP \backslash \Omega_2)).
\]
\end{definition}

For the several equivalent definitions of the index/winding number of a cycle $c$ in $\C$, we refer to \cite{Roe15}. For any cycle $c$ in $\C$, one sets $\Ind(c,\infty) = 0.$

\begin{definition}
Let $\Omega$ be a non-empty open subset of $\PPP$, $K$ be a non-empty compact subset of $\Omega$ and $c$ be a cycle in $\Omega \backslash (K \cup \{0\} \cup \{\infty\}).$ If $\infty \notin K$ and

\[
\Ind(c,z) = \begin{cases}1 \,\, &\text{if} \,\,\,\, z\in K\\ 0 \,\, &\text{if} \,\,\,\, z\in \PPP\backslash \Omega \end{cases},
\]
then $c$ is called a \emph{Cauchy cycle} for $K$ in $\Omega.$ If $\infty \in \Omega$ and

\[
\Ind(c,z) = \begin{cases}0 &\text{if} \,\,\,\, z\in K\\ -1 &\text{if} \,\,\,\, z\in \PPP\backslash \Omega \end{cases},
\]
then $c$ is called a \emph{anti-Cauchy cycle} for $K$ in $\Omega.$
\end{definition}

In \cite{Pohl09}, Lemma $2.3.1$, T. Pohlen refers to ad hoc explicit constructions which ensure that Cauchy and anti-Cauchy cycles always exist for any $\Omega$ and any $K$. In the next section, we shall see that this existence can easily be obtained by using singular homology.

\bigskip

Let $\Omega_1$ and $\Omega_2$ be two star-eligible open subsets of $\PPP$. Note that, if $z \in \Omega_1 \star \Omega_2$, then $z(\PPP\backslash\Omega_2)^{-1}$ is a closed subset of $\Omega_1.$

\begin{definition}
Let $z \in (\Omega_1 \star \Omega_2)\backslash \{0,\infty\}.$ A \emph{Hadamard cycle} for $z(\PPP\backslash\Omega_2)^{-1}$ in $\Omega_1$ is a cycle $c$ in $\Omega_1 \backslash (z(\PPP\backslash\Omega_2)^{-1} \cup \{0\}\cup \{\infty\})$ which satisfies the condition given in the following table :

\bigskip
\bigskip

\begin{center}
\renewcommand{\arraystretch}{1} 
\setlength{\tabcolsep}{0.4cm} 
\begin{tabular}{|c|c|c|c|c|}
\hline
\backslashbox{\,$\Omega_2$}{ ${}$ \\  ${}$ \\ \,$\Omega_1$} & $0,\infty$ & $\infty$ & $0$ & \\
\hline
$0,\infty$ & $\text{cc}^+$ or $\text{acc}^-$ & $\text{acc}^-$ & $\text{cc}^+$ & cc \\
\hline
$\infty$ & $\text{acc}^-$ & $\text{acc}^-$ & / & /\\
\hline
$0$ & $\text{cc}^+$ & / & $\text{cc}^+$ & / \\
\hline
& acc & / & / & /\\
\hline
\end{tabular}
\end{center}

\bigskip
\bigskip
\noindent
This table should be understood in the following way : The elements in the first row and the first column tell which of these elements are in $\Omega_1$ and $\Omega_2$ respectively. The abreviation cc (resp. acc) means that $c$ is a Cauchy (resp. anti-Cauchy) cycle for $z(\PPP\backslash\Omega_2)^{-1}$ in $\Omega_1$. The abreviation $\text{cc}^+$ (resp. $\text{acc}^-$) means that $c$ is a Cauchy (resp. anti-Cauchy) cycle with the extra condition $\Ind(c,0)=1$ (resp. $\Ind(c,0)=-1$). A "/" means that this case cannot occur.
\end{definition}

One can now extend the standard Hadamard product.

\begin{definition}\label{def:pohlenhad}
Let $f_1 \in \HH(\Omega_1)$ and $f_2 \in \HH(\Omega_2).$ For each $z\in (\Omega_1 \star \Omega_2)\backslash \{0,\infty\}$ one sets

\[
(f_1\star f_2)(z) = \frac{1}{2i\pi}\int_{c_z} f_1(\zeta)f_2\left(\frac{z}{\zeta}\right) \frac{d\zeta}{\zeta},
\]
where $c_z$ is a Hadamard cycle for $z(\PPP\backslash\Omega_2)^{-1}$ in $\Omega_1.$ One can check that this integral does not depend on the chosen Hadamard cycle (see Lemma 3.4.2 in \cite{Pohl09}). The function $f_1 \star f_2$ is called the \emph{Hadamard product} of $f_1$ and $f_2$.
\end{definition}

\begin{center}
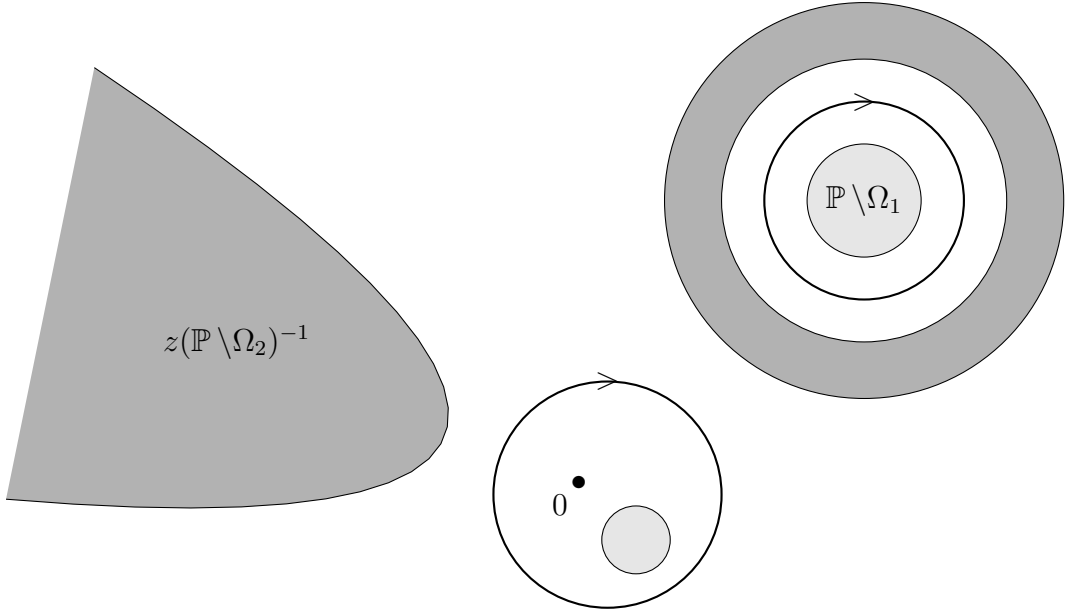

\begin{tikzpicture}[scale=0.75]

\draw(0,0)  node[below left] {$0$} node {$\bullet$};
\draw[fill=gray!20] (1,-1) circle (0.6);
\draw[fill=gray!20] (5,5) circle (1) node {$\PPP\backslash \Omega_1$};
\fill[gray!60,even odd rule] (5,5) circle (2.5) (5,5) circle (3.5);
\draw (5,5) circle (2.5);
\draw (5,5) circle (3.5);
\draw[thick] (0.5,-0.2) circle (2);
\draw[thick] (0.5 , 1.8) node {$>$};
\draw[thick] (5,5) circle (1.75);
\draw[thick] (5 , 6.75) node {$>$};
\draw[fill=gray!60,rotate around={70:(0,0)}, domain = -3.7:4]  plot (\x, 0.5*\x*\x+2.5);
\draw (-6,2.5) node {$z(\PPP\backslash\Omega_2)^{-1}$};
\end{tikzpicture}
\captionof{figure}{A Hadamard cycle for $z(\PPP\backslash\Omega_2)^{-1}$ in $\Omega_1$, in the case where $0,\infty \in \Omega_1$ and $\infty \in \Omega_2, 0\notin \Omega_2.$ }
\end{center}

\begin{proposition}[\cite{Pohl09}, Lemma $3.4.5$ and Proposition $3.6.4$]
The Hadamard product $f_1 \star f_2$ can be continuously extended to $\Omega_1 \star \Omega_2.$ If $0\in \Omega_1 \star \Omega_2$ (resp. $\infty\in \Omega_1 \star \Omega_2$), one has $(f_1 \star f_2)(0) = f_1(0)f_2(0)$ (resp. $(f_1 \star f_2)(\infty) = 0$). Moreover, $f_1 \star f_2$ is an element of $\HH(\Omega_1 \star \Omega_2).$
\end{proposition}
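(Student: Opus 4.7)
The natural strategy is to split the proof into three stages: holomorphy on $(\Omega_1 \star \Omega_2) \setminus \{0,\infty\}$, continuity at $0$, and continuity at $\infty$. The guiding principle is that the integral in Definition \ref{def:pohlenhad} is insensitive to homotopies of the Hadamard cycle, so for each case I would select the cycle judiciously and keep it essentially fixed as $z$ varies in a small neighborhood of the point under consideration.

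For the open part of $\Omega_1\star\Omega_2$, I would fix $z_0 \in (\Omega_1\star\Omega_2)\setminus\{0,\infty\}$ together with a Hadamard cycle $c_{z_0}$. Since $\PPP\setminus\Omega_2$ is compact and the map $(z,\zeta)\mapsto z\zeta^{-1}$ is uniformly continuous on the relevant compact sets, the moving compact $z(\PPP\setminus\Omega_2)^{-1}$ depends continuously on $z$ in the Hausdorff sense. Hence for $z$ in a small enough neighborhood of $z_0$ the same $c_{z_0}$ remains a Hadamard cycle (the index conditions are preserved because they are locally constant). The integrand $\zeta\mapsto f_1(\zeta)f_2(z/\zeta)/\zeta$ depends holomorphically on $z$ uniformly in $\zeta\in |c_{z_0}|$, so differentiation under the integral gives holomorphy.

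For the value at $0$, one first observes that $0\in \Omega_1\star\Omega_2$ forces, on unwinding Definition \ref{def:extmult}, that $0\in\Omega_1$ and $0\in\Omega_2$ in the main case. Then for $|z|$ small enough, $z(\PPP\setminus\Omega_2)^{-1}$ is a tiny compact set near $0$, and one may take $c_z$ to be a fixed small positively oriented circle $C(0,r)^+$ contained in $\Omega_1\setminus\{0\}$ and enclosing $z(\PPP\setminus\Omega_2)^{-1}$. As $z\to 0$, $f_2(z/\zeta)\to f_2(0)$ uniformly on $C(0,r)^+$, so by the classical Cauchy integral formula the integral tends to $f_1(0)f_2(0)$. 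The degenerate cases (for instance $\PPP\setminus\Omega_i=\{\infty\}$) are handled analogously with the cycle prescribed by the table.

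For the value at $\infty$, the condition $\infty\in\Omega_1\star\Omega_2$ forces $\infty$ to lie in at least one of the $\Omega_i$, say $\infty\in\Omega_1$, so that $f_1(\infty)=0$. I would then fix a Hadamard cycle $c$ supported in a neighborhood of $\infty$ in $\Omega_1$ where $f_1$ is small, and use this same $c$ for all $z$ with $|z|$ large enough; the uniform smallness of $f_1$ on $|c|$ combined with local boundedness of $f_2(z/\zeta)$ in $\zeta$ along $c$ yields $(f_1\star f_2)(z)\to 0$. Once continuity has been established everywhere, the Riemann removable singularity theorem promotes the continuous extension to a holomorphic one, showing $f_1\star f_2 \in \Hol(\Omega_1\star\Omega_2)$ and, with the vanishing above, $f_1\star f_2\in \HH(\Omega_1\star\Omega_2)$. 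The main obstacle I anticipate is the bookkeeping to verify in each of the cases recorded in the table that a suitable uniform choice of cycle exists and continues to satisfy the prescribed index conditions as $z$ approaches $0$ or $\infty$; the analytical content at each point is essentially the same Cauchy-formula/uniform-convergence argument.
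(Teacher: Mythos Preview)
The paper does not supply a proof of this proposition; it is quoted directly from Pohlen's thesis \cite{Pohl09} (Lemma~3.4.5 and Proposition~3.6.4), so there is no argument in the present article to compare against. Your outline is the natural one and presumably tracks Pohlen's original: freeze a Hadamard cycle locally and then pass limits or derivatives under the integral sign.

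One point worth tightening is the treatment at $\infty$. The hypothesis $\infty\in\Omega_1\star\Omega_2$ in fact forces $\infty\in\Omega_1$ \emph{and} $\infty\in\Omega_2$, not merely one of them: if $\infty\in\PPP\setminus\Omega_2$ then condition~(2) of Definition~\ref{def:stareligible} gives $0\notin\PPP\setminus\Omega_1$, and any nonzero $a\in\PPP\setminus\Omega_1$ (which exists since $\Omega_1\neq\PPP$) satisfies $a\cdot\infty=\infty$, contradicting $\infty\in\Omega_1\star\Omega_2$. You implicitly rely on $\infty\in\Omega_2$ when you invoke ``local boundedness of $f_2(z/\zeta)$'' as $|z|\to\infty$, so this step should be made explicit. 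With that in place your argument goes through cleanly: a large clockwise circle $C(0,R)^-\subset\Omega_1$ serves as an $\mathrm{acc}^-$ cycle for all sufficiently large $|z|$ (the table always permits $\mathrm{acc}^-$ when $\infty$ lies in both $\Omega_i$), on it $f_1$ is uniformly small and $f_2(z/\zeta)$ is bounded, and the integral tends to $0$. The analogous remark applies at $0$: there too one has $0\in\Omega_1\cap\Omega_2$ without exception, so your ``degenerate cases'' do not actually arise.
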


\begin{proposition}[\cite{Pohl09}, Proposition $3.6.1$]
The Hadamard product is commutative.
\end{proposition}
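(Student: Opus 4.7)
The plan is to verify the identity $f_1\star f_2 = f_2\star f_1$ pointwise on $(\Omega_1\star\Omega_2)\setminus\{0,\infty\}$ by a change of variables in the defining integral, then to conclude on the whole of $\Omega_1\star\Omega_2$ by continuity of both sides. Fix such a $z$ and a Hadamard cycle $c$ for $z(\PPP\backslash\Omega_2)^{-1}$ in $\Omega_1$. Consider the holomorphic involution $\varphi:\PPP\to\PPP$, $\varphi(\zeta)=z/\zeta$, and set $c' = -\varphi_*(c)$, the pushforward of $c$ with reversed orientation. The tautology
\[
\zeta \in \Omega_1 \setminus z(\PPP\backslash\Omega_2)^{-1} \iff z/\zeta \in \Omega_2 \setminus z(\PPP\backslash\Omega_1)^{-1}
\]
immediately shows that $c'$ is a cycle in $\Omega_2 \setminus (z(\PPP\backslash\Omega_1)^{-1} \cup \{0,\infty\})$, hence a candidate Hadamard cycle for $f_2\star f_1$ at $z$.

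The heart of the argument is to show that $c'$ satisfies the index conditions required by Pohlen's table after the swap of $\Omega_1$ and $\Omega_2$. The main tool is the winding-number identity
\[
\Ind(\varphi_*(c),\varphi(w)) = \Ind(c,w) - \Ind(c,0), \qquad w\in\C^*\setminus c,
\]
obtained by rewriting $d\varphi(\zeta)/(\varphi(\zeta)-\varphi(w)) = w\,d\zeta/(\zeta(\zeta-w))$ and splitting it into partial fractions as $(\zeta-w)^{-1}d\zeta - \zeta^{-1}d\zeta$. Applied formally to $w=\infty$ with the convention $\Ind(c,\infty)=0$, the same substitution yields $\Ind(\varphi_*(c),0) = -\Ind(c,0)$. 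I would then run through the nine entries of the table: the transposition swapping rows and columns fixes each admissible cell except the two opposite corners $(\Omega_1=\emptyset,\Omega_2=\{0,\infty\})$ and $(\Omega_1=\{0,\infty\},\Omega_2=\emptyset)$, where cc and acc are exchanged; this exchange is precisely offset by the orientation reversal built into $c'$, while the extra conditions $\Ind(c,0)=\pm1$ transfer correctly through the formula above.

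Once $c'$ is known to be an admissible Hadamard cycle for $f_2\star f_1$ at $z$, the substitution $\eta=z/\zeta$, for which $d\zeta/\zeta=-d\eta/\eta$, produces
\[
\frac{1}{2i\pi}\int_c f_1(\zeta)f_2(z/\zeta)\frac{d\zeta}{\zeta} = \frac{1}{2i\pi}\int_{c'} f_2(\eta)f_1(z/\eta)\frac{d\eta}{\eta} = (f_2\star f_1)(z),
\]
the last equality by the cycle-independence of the Hadamard integral (Lemma~3.4.2 in \cite{Pohl09}).

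The main obstacle is the bookkeeping of the case analysis: nine cells to check, some admitting more than one Hadamard type, each requiring up to three index conditions to be transported through $\varphi$. The underlying computation is essentially the same in every case, but carefully tracking signs and the cc/acc dichotomy is the step most prone to error.
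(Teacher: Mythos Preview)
The paper does not give its own proof of this proposition; it merely states the result and cites Pohlen's thesis \cite{Pohl09}, Proposition~3.6.1. So there is nothing in the paper to compare your argument against beyond that citation.

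Your approach is correct and is the standard one (and is essentially what Pohlen does): the substitution $\eta=z/\zeta$ carries a Hadamard cycle for $z(\PPP\setminus\Omega_2)^{-1}$ in $\Omega_1$ to one for $z(\PPP\setminus\Omega_1)^{-1}$ in $\Omega_2$, with the orientation reversal absorbed by $d\zeta/\zeta=-d\eta/\eta$. Your winding-number identity $\Ind(\varphi_*(c),\varphi(w))=\Ind(c,w)-\Ind(c,0)$ is exactly right, and together with $\Ind(\varphi_*(c),0)=-\Ind(c,0)$ it does transport each entry of the table correctly under the swap of $\Omega_1$ and $\Omega_2$. The verification I spot-checked (e.g.\ the $\text{cc}^+$ case with $0,\infty\in\Omega_1\cap\Omega_2$, and the cc$\leftrightarrow$acc exchange in the two extreme corners) goes through as you describe. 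The only caveat is the one you already flag: the argument is not conceptually difficult, but the nine-cell bookkeeping is where errors creep in, so in a full write-up it is worth tabulating the transported indices explicitly rather than leaving them implicit.
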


In all this framework, the hypothesis $f(\infty)=0$, when $\infty \in \Omega$, is highly used. In the next section, we shall provide a more general definition of Hadamard cycles and Hadamard product, based on singular homology theory, which does not require the vanishing condition at infinity.

\section{Generalized Hadamard cycles}

For classical facts about singular homology, we refer to \cite{Gree67} and \cite{Hatc02}. For a general background on sheaf theory and derived functors, we refer to \cite{Kash90}. For a sheaf-theoretic definition of the Borel-Moore homology and the link with singular homology on HLC-spaces, we refer to \cite{Bred97}.

\bigskip
Let us recall that on any topological space $X$, there is an orientation complex $\omega_X$ which is canonically isomorphic to $\Z_X[n]$ if $X$ is an oriented topological manifold of pure dimension $n$. On a topological space $X$, the Borel-Moore homology (resp. Borel-Moore homology with compact support) of degree $k$ is defined by

\[
{}^{\BM}\!H_k(X) := H^{-k}(X, \omega_X), \quad {}^{\BM}\!H_k^{c}(X) := H^{-k}_c(X, \omega_X).
\]

\begin{definition}\index{Orientation class}\label{def:OC}
Let $X$ be an oriented topological manifold of pure dimension $n$. \emph{The orientation class} of $X$ is the class

\[
\alpha_X \in {}^{\BM}\!H_n(X) \simeq H^{-n}(X, \Z_X[n]) \simeq H^0(X,\Z_X)
\]
corresponding to the constant section $1$ of $\Z_X$.
\end{definition}

Let $X$ be a topological manifold $X$ of pure dimension $n$. Since $X$ is homologically locally connected, the complex ${\RR}\Gamma_c(X,\omega_X)$ is canonically isomorphic to the complex of singular chains on $X$. Hence, ${}^{\BM}\!H_k^{c}(X)$ is isomorphic to the usual singular homology group of degree $k$, $H_k(X)$. Now, let $K$ be a compact subset of $X$ and consider the two canonical excision distinguished triangles

$$ {\RR}\Gamma_{X\backslash K}(X,\omega_X) \to {\RR}\Gamma(X,\omega_X) \to {\RR}\Gamma(K,\omega_X)\overset{+}\to$$ and $${\RR}\Gamma_{c}(X\backslash K,\omega_X) \to {\RR}\Gamma_c(X,\omega_X) \to {\RR}\Gamma(K,\omega_X) \overset{+}\to.$$ The second triangle implies that $H^{-n}(K,\omega_X)$ is canonically isomorphic to the relative singular homology group $H_{n}(X,X\backslash K)$. Hence, we get a sequence of morphisms $$ {}^{\BM}\!H_n(X) \to H^{-n}(K,\omega_X) \xrightarrow{\sim} H_{n}(X,X\backslash K)$$ and $\alpha_X \in {}^{\BM}\!H_n(X)$ induces a relative orientation class $\alpha_{X, K} \in H_{n}(X,X\backslash K).$

\begin{proposition}\label{prop:homologous}
Let $\Omega$ be a proper open subset of $\C$ and let $F = \C \backslash \Omega.$ There is a canonical isomorphism
\[
H_1(\Omega) \xrightarrow{\sim} H^0_c(F, \Z_F)
\]
given by

\[
[c] \mapsto \left(z\mapsto \Ind_z(c) \right).
\]
\end{proposition}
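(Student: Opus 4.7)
The plan is to obtain the isomorphism abstractly from a long exact sequence of compactly supported cohomology, and then to identify its inverse with the winding-number formula via a naturality argument.

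Since $\Omega$ is an oriented $2$-manifold, $\omega_\Omega \simeq \Z_\Omega[2]$, so
$$H_1(\Omega) = {}^{\BM}\!H_1^c(\Omega) = H^{-1}_c(\Omega,\omega_\Omega) \simeq H^1_c(\Omega,\Z_\Omega).$$
Applied to the open--closed decomposition $\C = \Omega \sqcup F$, the compactly supported excision triangle
$${\RR}\Gamma_c(\Omega,\Z_\Omega) \to {\RR}\Gamma_c(\C,\Z_\C) \to {\RR}\Gamma_c(F,\Z_F) \xrightarrow{+}$$
yields the exact sequence
$$H^0_c(\C,\Z_\C) \to H^0_c(F,\Z_F) \xrightarrow{\delta} H^1_c(\Omega,\Z_\Omega) \to H^1_c(\C,\Z_\C).$$
The outer terms vanish: $H^0_c(\C,\Z_\C) = 0$ because $\C$ is connected and non-compact, and $H^1_c(\C,\Z_\C) \simeq H_1(\C) = 0$ because $\C$ is contractible. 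Hence $\delta$ is an isomorphism, giving the abstract isomorphism $H^0_c(F,\Z_F) \xrightarrow{\sim} H_1(\Omega)$.

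To identify the \emph{inverse} with $[c] \mapsto (z \mapsto \Ind_z(c))$, I would first check the formula is well-defined: the winding number $\Ind_z(c)$ is locally constant in $z$ on the complement of $|c|$, hence on $F$; it vanishes for $|z|$ larger than the diameter of $|c|$, so its support is bounded and, being closed in $F$ with $F$ closed in $\C$, is compact in $F$; and it is a homological invariant of $c$ in $\Omega$. Then I would exploit the naturality of the excision triangle with respect to the closed inclusion $\{z_0\} \hookrightarrow F$ and the corresponding open inclusion $\Omega \hookrightarrow \C \setminus \{z_0\}$, obtaining the commutative square
$$\begin{array}{ccc}
H^0_c(F,\Z_F) & \xrightarrow{\delta} & H_1(\Omega) \\
\downarrow & & \downarrow \\
H^0_c(\{z_0\},\Z) = \Z & \xrightarrow{\delta_{z_0}} & H_1(\C \setminus \{z_0\}) = \Z,
\end{array}$$
in which the left vertical is evaluation at $z_0$ and the right vertical is induced by the inclusion $\Omega \hookrightarrow \C \setminus \{z_0\}$. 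A diagram chase then reduces the claim to showing that $\delta_{z_0}(1)$ is the winding-number-one generator of $H_1(\C \setminus \{z_0\})$.

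The main obstacle is precisely this last point: the abstract theory only determines $\delta_{z_0}$ up to a sign, and one must trace through the orientation identification $\omega_\C \simeq \Z_\C[2]$ of Definition \ref{def:OC} to see that the sign is positive. This can be settled either by a direct \v{C}ech-cohomology computation of $\delta_{z_0}$ using the cover $\{\C \setminus \{z_0\}, D\}$ where $D$ is a small open disk around $z_0$, or equivalently by verifying that under the connecting map of the pair $(\C, \C \setminus \{z_0\})$ the relative orientation class $\alpha_{\C,\{z_0\}} \in H_2(\C, \C \setminus \{z_0\})$ maps to the class of a positively oriented circle around $z_0$. Once this orientation bookkeeping is done, commutativity of the square above gives $\Ind_{z_0}([c]) = \sigma(z_0)$ for every $z_0 \in F$ and every $\sigma \in H^0_c(F,\Z_F)$ with $\delta(\sigma) = [c]$, which is exactly the asserted formula.
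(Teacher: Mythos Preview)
Your proposal is correct and follows essentially the same route as the paper: both use the excision triangle for the decomposition $\C=\Omega\sqcup F$ to obtain $\delta$, and both identify $\delta^{-1}$ with the winding-number map via the naturality square comparing $F$ with a single point $\{z_0\}$. The paper simply asserts ``Clearly, $\delta_z^{-1}([c])=\Ind_z(c)$'' at the step where you carefully flag the sign issue; your discussion of how to pin down the orientation via $\alpha_{\C,\{z_0\}}$ is a welcome elaboration of what the paper leaves implicit.
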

\begin{proof}
Let us consider the excision distinguished triangle

\begin{equation}\label{equ:triangle}
{\RR}\Gamma_c(\Omega, \omega_{\C}) \to {\RR}\Gamma_c(\C, \omega_{\C}) \to {\RR}\Gamma_c(F,\omega_{\C}) \overset{+1}\to.
\end{equation}

\noindent
It induces a long exact sequence \begin{center}
\begin{tikzcd}
\cdots \rar & H_2(\Omega) \rar & H_2(\C) \rar
             \ar[draw=none]{d}[name=X, anchor=center]{}
    & H^{-2}{\RR}\Gamma_c(F,\omega_{F}) \ar[rounded corners,
            to path={ -- ([xshift=2ex]\tikztostart.east)
                      |- (X.center) \tikztonodes
                      -| ([xshift=-2ex]\tikztotarget.west)
                      -- (\tikztotarget)}]{dll}[at end]{} \\      
  & H_1(\Omega) \rar & H_1(\C) \rar & H^{-1}{\RR}\Gamma_c(F,\omega_{F}) \rar & \cdots
\end{tikzcd}
\end{center}
\noindent
Since $\C$ is contractible, one has $H_2(\C) \simeq H_1(\C) \simeq \{0\}$. Therefore, taking into account that $\omega_{F} \simeq \Z_F[2],$ one gets a canonical isomorphism

\[
\delta : H^0_c(F,\Z_F) \xrightarrow{\sim} H_1(\Omega).
\]
Let $z\in F.$ Applying (\ref{equ:triangle}) with $\C \backslash\{z\}, \C$ and $\{z\},$ one gets an isomorphism

\[
\delta_z : \Z \simeq H^0_c(\{z\},\Z_{\{z\}}) \xrightarrow{\sim} H_1(\C\backslash \{z\}).
\]
Clearly, $\delta_z^{-1}([c]) = \Ind_z(c)$. Moreover, by Proposition $1.3.6$ in \cite{Kash90}, there is a commutative diagram  

\[
\xymatrix{
H^0_c(F,\Z_F)\ar[r]^{\delta}\ar[d]_{i_z} &H_1(\Omega)\ar[d]^{j_z}\\
H^0_c(\{z\},\Z_{\{z\}})\ar[r]_{\delta_z} &H_1(\C\backslash \{z\})
}
\]
where $i_z(f)=f(z)$ and $j_z([c])=[c].$ Hence, one sees that $\delta^{-1}([c])(z) = \Ind_z(c).$ Since this argument is valid for all $z\in F$, the conclusion follows.
\end{proof}

To introduce our definition of generalized Hadamard cycles, we have to be in the same setting as T. Pohlen. However, looking at Definition~\ref{def:stareligible}, we find it more natural to start with closed subsets instead of open ones. 

\begin{definition}\label{def:closedstareligible}\index{Star-eligible closed sets}
Two closed subsets $S_1$ and $S_2$ of $\PPP$ are \emph{star-eligible} if $S_1,S_2$ and $S_1S_2$ are proper and if $S_1 \times S_2 \subset M.$
\end{definition}

For the rest of the section we fix $S_1$ and $S_2$, two star-eligible closed subsets of $\PPP$. If $z\in \C^* \backslash S_1S_2$, $S_1$ is a compact subset of $\PPP\backslash zS_2^{-1}$ and, thus, a compact subset of $\PPP\backslash (zS_2^{-1} \cup (\{0,\infty\}\backslash S_1))$. Moreover, one has 
\[
(\PPP\backslash (zS_2^{-1} \cup (\{0,\infty\}\backslash S_1)) )\backslash S_1 = \PPP\backslash (S_1 \cup zS_2^{-1} \cup \{0\} \cup \{\infty\}).
\]
Let $z\in \C^* \backslash S_1S_2.$

\begin{definition}\label{def:genhadcycle}\index{Generalized Hadamard cycle}
A \emph{generalized Hadamard cycle} for $S_1$ in $\PPP\backslash (zS_2^{-1} \cup (\{0,\infty\}\backslash S_1))$ is a representative $c$ of the class in $H_1(\PPP\backslash (S_1 \cup zS_2^{-1} \cup \{0\} \cup \{\infty\}))$ which is the image of

\[
-\alpha_{\PPP\backslash (zS_2^{-1} \cup (\{0,\infty\}\backslash S_1)),S_1} \in H_2(\PPP\backslash (zS_2^{-1} \cup (\{0,\infty\}\backslash S_1)),\PPP\backslash (S_1 \cup zS_2^{-1}\cup \{0\}\cup \{\infty\}))
\]
by the canonical map

\[
\xymatrix{
H_2(\PPP\backslash (zS_2^{-1} \cup (\{0,\infty\}\backslash S_1)),\PPP\backslash (S_1 \cup zS_2^{-1}\cup \{0\}\cup \{\infty\})) \ar[d] \\ H_1(\PPP\backslash (S_1 \cup zS_2^{-1} \cup \{0\} \cup \{\infty\})).
}
\]
\end{definition}

Our aim is now to define a product

\[
\Hol(\PPP \backslash S_1) \times \Hol(\PPP \backslash S_2) \to \Hol(\C^*\backslash S_1S_2)
\]
which generalizes the extended Hadamard product of T. Pohlen.

\begin{definition}\label{def:genhadproduct}\index{Generalized Hadamard product}
Let $f_1 \in \Hol(\PPP \backslash S_1)$ and $f_2 \in \Hol(\PPP \backslash S_2).$ For each $z\in \C^*\backslash S_1S_2$ we set

\[
(f_1\star f_2)(z) = \frac{1}{2i\pi}\int_{c_z} f_1(\zeta)f_2\left(\frac{z}{\zeta}\right) \frac{d\zeta}{\zeta},
\]
where $c_z$ is a generalized Hadamard cycle for $S_1$ in $\PPP\backslash (zS_2^{-1} \cup (\{0,\infty\}\backslash S_1))$. Since two generalized Hadamard cycles are homologous, the definition does not depend on the chosen generalized Hadamard cycle. The function $f_1 \star f_2$ is called the \emph{generalized Hadamard product} of $f_1$ and $f_2.$
\end{definition}

\begin{center}
\begin{tikzpicture}[scale=0.75]

\draw(0,0)  node[below left] {$0$} node {$\bullet$};
\draw[fill=gray!20] (2,-1) circle (0.6);
\draw[fill=gray!20] (5,5) circle (1) node {$S_1$};
\fill[gray!60,even odd rule] (5,5) circle (2.5) (5,5) circle (3.5);
\draw (5,5) circle (2.5);
\draw (5,5) circle (3.5);
\draw[thick] (2,-1) circle (1.2);
\draw[thick] (2,0.2) node {$>$};
\draw[thick] (5,5) circle (1.75);
\draw[thick] (5 , 6.75) node {$>$};
\draw[fill=gray!60,rotate around={70:(0,0)}, domain = -3.7:4]  plot (\x, 0.5*\x*\x+2.5);
\draw (-6,2.5) node {$zS_2^{-1}$};
\end{tikzpicture}
\captionof{figure}{A generalized Hadamard cycle for $S_1$ in $\PPP\backslash (zS_2^{-1} \cup (\{0,\infty\}\backslash S_1))$, in the case where $0, \infty \notin S_1$ and $0 \in S_2, \infty\notin S_2.$ }
\end{center}

\begin{lemma}\label{lem:uniform}
Let $f_1 \in \Hol(\PPP \backslash S_1)$ and $f_2 \in \Hol(\PPP \backslash S_2).$ For each compact subset $K$ of $\C^*\backslash S_1S_2$, there is a cycle $c_K$ in $\PPP\backslash(S_1 \cup KS_2^{-1} \cup \{0\} \cup \{\infty\})$ such that

\[
(f_1\star f_2)(z) = \frac{1}{2i\pi}\int_{c_K} f_1(\zeta)f_2\left(\frac{z}{\zeta}\right) \frac{d\zeta}{\zeta},
\]
for all $z\in K.$
\end{lemma}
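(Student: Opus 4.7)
The plan is to define $c_K$ as a single representative of the relative orientation class built with $K$ in place of a single $z$, and then show that this cycle automatically becomes a generalized Hadamard cycle for every $z\in K$ after inclusion into the larger open set $\PPP\setminus(S_1\cup zS_2^{-1}\cup\{0,\infty\})$.

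First I would check the basic topological setup. Since $K\subset\C^*$ is compact and $K\times S_2^{-1}\subset M$ (because $K$ avoids $0$ and $\infty$), continuity of the multiplication on $M$ shows that $KS_2^{-1}$ is a compact subset of $\PPP$. If some element of $KS_2^{-1}$ belonged to $S_1$, we would get $K\cap S_1S_2\neq\emptyset$, contradicting the choice of $K$. Hence $S_1$ is a compact subset of the open set
\[
U_K := \PPP\setminus\bigl(KS_2^{-1}\cup(\{0,\infty\}\setminus S_1)\bigr)
\]
and its complement in $U_K$ is
\[
V_K := \PPP\setminus\bigl(S_1\cup KS_2^{-1}\cup\{0\}\cup\{\infty\}\bigr).
\]
In particular the relative orientation class $\alpha_{U_K,S_1}\in H_2(U_K,V_K)$ of Definition~\ref{def:OC} is available, and I define $c_K$ to be a singular $1$-cycle in $V_K$ representing the image of $-\alpha_{U_K,S_1}$ under the connecting morphism $H_2(U_K,V_K)\to H_1(V_K)$ in the long exact sequence of the pair, in complete analogy with Definition~\ref{def:genhadcycle}.

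Next, fix $z\in K$. Because $zS_2^{-1}\subset KS_2^{-1}$, the inclusion $U_K\hookrightarrow U_z := \PPP\setminus(zS_2^{-1}\cup(\{0,\infty\}\setminus S_1))$ and the corresponding inclusion $V_K\hookrightarrow V_z$ induce a map of pairs $j_z:(U_K,V_K)\to(U_z,V_z)$. Naturality of the construction of $\alpha_{\cdot,S_1}$ with respect to open inclusions — which rests on the fact that the global section $1$ of $\Z_{U_z}$ restricts to the global section $1$ of $\Z_{U_K}$, and on the functoriality of the excision triangles used in Definition~\ref{def:OC} — yields $(j_z)_*\alpha_{U_K,S_1}=\alpha_{U_z,S_1}$. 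Passing to $H_1$ via the connecting map, the image of $[c_K]$ in $H_1(V_z)$ is therefore the class defining a generalized Hadamard cycle for $S_1$ in $U_z$, so $c_K$ and any Hadamard cycle $c_z$ are homologous in $V_z$.

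To finish, observe that for each $z\in K$ the integrand $\zeta\mapsto f_1(\zeta)f_2(z/\zeta)/\zeta$ is holomorphic on the whole of $V_z$: indeed $f_1$ is holomorphic on $\PPP\setminus S_1$, the condition $\zeta\notin zS_2^{-1}$ ensures $z/\zeta\in\PPP\setminus S_2$, and $\zeta\in\C^*$ makes $1/\zeta$ holomorphic. The homology version of Cauchy's theorem then gives
\[
\frac{1}{2i\pi}\int_{c_K} f_1(\zeta)f_2\!\left(\frac{z}{\zeta}\right)\frac{d\zeta}{\zeta}=\frac{1}{2i\pi}\int_{c_z} f_1(\zeta)f_2\!\left(\frac{z}{\zeta}\right)\frac{d\zeta}{\zeta}=(f_1\star f_2)(z),
\]
for every $z\in K$, which is the claimed uniformity. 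I expect the only delicate point to be the naturality identity $(j_z)_*\alpha_{U_K,S_1}=\alpha_{U_z,S_1}$; it is a routine diagram chase between the two excision triangles defining the relative orientation classes, but it is what really makes the single cycle $c_K$ work for all $z\in K$ at once.
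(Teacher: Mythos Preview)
Your proof is correct and follows essentially the same strategy as the paper: define $c_K$ as a representative of the image of $-\alpha_{U_K,S_1}$ under the boundary map, then use naturality of the relative orientation class under the inclusions $(U_K,V_K)\hookrightarrow(U_z,V_z)$ to conclude that $c_K$ serves as a generalized Hadamard cycle for every $z\in K$. The paper's version phrases the naturality step as a commutative square and dispatches it with the word ``Obviously''; your diagram chase spelling out $(j_z)_*\alpha_{U_K,S_1}=\alpha_{U_z,S_1}$ is exactly what underlies that square, and your added remarks on why $KS_2^{-1}$ is compact and disjoint from $S_1$ and why the integrand is holomorphic on $V_z$ are helpful details the paper leaves implicit.
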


\begin{proof}
There is a fundamental class

\[
\alpha_{\PPP\backslash (KS_2^{-1} \cup (\{0,\infty\}\backslash S_1)),S_1} \in H_2(\PPP\backslash (KS_2^{-1} \cup (\{0,\infty\}\backslash S_1)),\PPP\backslash (S_1 \cup KS_2^{-1}\cup \{0\}\cup \{\infty\}))
\]
We choose $c_K$ to be a representative of the class in $H_1(\PPP\backslash(S_1 \cup KS_2^{-1} \cup \{0\} \cup \{\infty\}))$ which is the image of $-\alpha_{\PPP\backslash (KS_2^{-1} \cup (\{0,\infty\}\backslash S_1)),S_1}$ by the canonical map

\[
\xymatrix{
H_2(\PPP\backslash (KS_2^{-1} \cup (\{0,\infty\}\backslash S_1)),\PPP\backslash (S_1 \cup KS_2^{-1}\cup \{0\}\cup \{\infty\})) \ar[d] \\ H_1(\PPP\backslash (S_1 \cup KS_2^{-1} \cup \{0\} \cup \{\infty\})).
}
\]
For each $z\in K$, there is a canonical commutative diagram

\[
\xymatrix@C=-3cm{
H_2(\PPP\backslash (KS_2^{-1}\cup (\{0,\infty\}\backslash S_1)),\PPP\backslash (S_1 \cup KS_2^{-1}\!\cup \{0\} \cup \{\infty\})) \ar[rd]\ar[ddd] & \\ 
& H_1(\PPP\backslash (S_1 \cup KS_2^{-1} \cup \{0\} \cup \{\infty\})) \ar[d] \\
& H_1(\PPP\backslash (S_1 \cup zS_2^{-1} \cup \{0\} \cup \{\infty\}))\\
H_2(\PPP\backslash (zS_2^{-1} \cup (\{0,\infty\}\backslash S_1)),\PPP\backslash (S_1 \cup zS_2^{-1}\cup \{0\}\cup \{\infty\})) \ar[ru] & .
}
\]

\medskip
\noindent
Obviously, $\alpha_{\PPP\backslash (zS_2^{-1} \cup (\{0,\infty\}\backslash S_1)),S_1}$ is the image of $\alpha_{\PPP\backslash (KS_2^{-1} \cup (\{0,\infty\}\backslash S_1)),S_1}$ by the left vertical map. Therefore, by the commutativity of the diagram, one can deduce that $c_K$ is a generalized Hadamard cycle for $S_1$ in $\PPP\backslash (zS_2^{-1} \cup (\{0,\infty\}\backslash S_1))$, for all $z\in K$. Hence the conclusion.
\end{proof}

\begin{proposition}
The generalized Hadamard product is a well-defined map

\[
\Hol(\PPP \backslash S_1) \times \Hol(\PPP \backslash S_2) \to \Hol(\C^* \backslash S_1 S_2).
\]
\end{proposition}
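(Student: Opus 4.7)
The plan is to reduce holomorphy to a statement about integrals depending holomorphically on a parameter, using Lemma~\ref{lem:uniform} to freeze the integration cycle on a neighborhood of each point.

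Fix $z_0 \in \C^* \backslash S_1 S_2$. Since $\C^* \backslash S_1 S_2$ is open (because $S_1 S_2$ is closed in $\PPP$ and $S_1 S_2 \neq \PPP$), one can choose a compact neighborhood $K$ of $z_0$ in $\C^* \backslash S_1 S_2$; for instance a small closed disc. Apply Lemma~\ref{lem:uniform} to obtain a cycle $c_K$ in $\PPP \backslash (S_1 \cup K S_2^{-1} \cup \{0\} \cup \{\infty\})$ such that
\[
(f_1 \star f_2)(z) = \frac{1}{2i\pi}\int_{c_K} f_1(\zeta)\,f_2\!\left(\frac{z}{\zeta}\right)\frac{d\zeta}{\zeta}
\]
for every $z \in K$.

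The second step is to check that the integrand depends holomorphically on $z$ uniformly for $\zeta$ on the support $|c_K|$ of the cycle. The support $|c_K|$ is compact, avoids $0$ and $\infty$, and lies in $\PPP \backslash S_1$, so $f_1(\zeta)/\zeta$ is a bounded continuous function on $|c_K|$. For the second factor, note that $\zeta \notin K S_2^{-1}$ means that $\zeta s \neq z'$ for any $s \in S_2$ and any $z' \in K$, equivalently $z'/\zeta \notin S_2$ for every $z' \in K$. By compactness of $|c_K|$ and $K$, an open neighborhood $U \supset K$ in $\C^* \backslash S_1 S_2$ still satisfies $z/\zeta \notin S_2$ for all $(z,\zeta) \in U \times |c_K|$. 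Hence $(z,\zeta) \mapsto f_2(z/\zeta)$ is a continuous function on $U \times |c_K|$ that is holomorphic in $z$ for each fixed $\zeta$.

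The third step is the standard fact that differentiation under the integral sign (or Morera + Fubini) then yields holomorphy of $z \mapsto (f_1\star f_2)(z)$ on the interior of $K$, in particular near $z_0$. Since $z_0 \in \C^* \backslash S_1 S_2$ was arbitrary, $f_1 \star f_2 \in \Hol(\C^* \backslash S_1 S_2)$. The main conceptual obstacle, namely producing a single cycle that works simultaneously for all parameter values in a neighborhood, has already been handled by Lemma~\ref{lem:uniform}; the remaining argument is purely routine complex analysis.
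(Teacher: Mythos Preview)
Your proof is correct and follows essentially the same approach as the paper: reduce to a local statement, use Lemma~\ref{lem:uniform} to obtain a single cycle valid on a compact neighborhood (the paper uses a closed disc $\overline{D}$), and conclude by differentiation under the integral sign. Your write-up is simply more explicit about why that last step is justified.
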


\begin{proof}
Let $f_1 \in \Hol(\PPP \backslash S_1)$ and $f_2 \in \Hol(\PPP \backslash S_2).$ We have to check that $f_1\star f_2$ is holomorphic on $\C^* \backslash S_1 S_2$. Since it is a local property, it is enough to prove that $f_1 \star f_2$ is holomorphic on each small open disk $D \subset \C^* \backslash S_1 S_2.$ Let $D$ be such a disk. By Lemma~\ref{lem:uniform} there is a cycle $c_{\overline{D}}$ such that

\[
(f_1\star f_2)(z) = \frac{1}{2i\pi}\int_{c_{\overline{D}}} f_1(\zeta)f_2\left(\frac{z}{\zeta}\right) \frac{d\zeta}{\zeta},
\]
for all $z\in D.$ We conclude by derivation under the integral sign.
\end{proof}

We shall now prove that our product is a good generalization of the extended Hadamard product of T. Pohlen. By doing so, the reader shall see why we chose such a sign convention in Definition~\ref{def:genhadcycle}.

\begin{proposition}
Let $f_1 \in \HH(\PPP\backslash S_1)$ and $f_2 \in \HH(\PPP\backslash S_2)$. Let $z \in \C^* \backslash S_1 S_2.$ Let $c_z$ be a generalized Hadamard cycle for $S_1$ in $\PPP\backslash (zS_2^{-1} \cup (\{0,\infty\}\backslash S_1))$ and $d_z$ a Hadamard cycle for $zS_2^{-1}$ in $\PPP\backslash S_1.$ Then,

\[
\frac{1}{2i\pi}\int_{c_z} f_1(\zeta)f_2\left(\frac{z}{\zeta}\right) \frac{d\zeta}{\zeta} = \frac{1}{2i\pi}\int_{d_z} f_1(\zeta)f_2\left(\frac{z}{\zeta}\right) \frac{d\zeta}{\zeta}.
\]
\end{proposition}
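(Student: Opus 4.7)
The plan is to realize $c_z$ and $d_z$ as the same class in a single ambient $H_1$, exploiting the $\HH$ hypothesis to extend the integrand to a larger domain. Set $\omega(\zeta) := f_1(\zeta)\,f_2(z/\zeta)\,\tfrac{d\zeta}{\zeta}$ and $U := \PPP \setminus (S_1 \cup zS_2^{-1})$. The first observation is that $\omega$ extends to a holomorphic $1$-form on all of $U$, in particular at any of $0, \infty$ which may lie in $U$: for instance, if $0 \in U$ then $0 \notin S_1$ and $\infty \notin S_2$, so $f_2 \in \HH(\PPP\setminus S_2)$ gives $f_2(z/\zeta) = O(\zeta)$ near $\zeta = 0$, cancelling the pole of $d\zeta/\zeta$; the case $\zeta = \infty$ is symmetric. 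Being closed, $\omega$ has the property that $\int_c \omega$ depends only on $[c] \in H_1(U)$, so the proposition reduces to the identity $[c_z] = [d_z]$ in $H_1(U)$.

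Setting $T := S_1 \cup zS_2^{-1}$ and running the excision argument of Proposition~\ref{prop:homologous} with $\PPP$ in place of $\C$ — using $H_2(\PPP) \simeq \Z$, $H_1(\PPP) = 0$, and $\omega_\PPP \simeq \Z_\PPP[2]$ — produces a short exact sequence
\[
0 \longrightarrow \Z \cdot \mathbf{1} \longrightarrow H^0(T,\Z) \longrightarrow H_1(U) \longrightarrow 0
\]
together with the formula $[c] \mapsto (p \mapsto \Ind_p(c))$ mod constants (with the paper's convention $\Ind_\infty := 0$). The task becomes: check that the vectors $(\Ind_p(c_z))_{p\in T}$ and $(\Ind_p(d_z))_{p\in T}$ agree up to an additive constant on $T$.

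Represent $\alpha_{V,S_1}$ by $[\bar N]$ for a compact manifold-with-boundary neighborhood $\bar N$ of $S_1$ inside $V := \PPP \setminus (zS_2^{-1} \cup (\{0,\infty\}\setminus S_1))$, so that $c_z$ is homologous to $-\partial[\bar N]$. Decompose $\bar N = \bigsqcup_i N_i$ in connected components; at most one component $N_{i_0}$ contains $\infty$, which happens precisely when $\infty \in S_1$. Using the orientation of $\PPP$ in the $\zeta$- and $u=1/\zeta$-charts, $\partial N_i$ contributes $+1$ on $N_i^\circ$ for $N_i \not\ni \infty$, whereas $\partial N_{i_0}$ contributes $-1$ at every finite point of $\PPP \setminus N_{i_0}$. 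Summing, the winding vector of $c_z$ on $T$ is $(-1 \text{ on } S_1,\,0 \text{ on } zS_2^{-1})$ when $\infty \notin S_1$ and $(0,\,+1)$ when $\infty \in S_1$ — two vectors that differ by the constant $\mathbf{1}$ on $T$.

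Pohlen's table assigns $d_z$ one of two shapes: Cauchy-type, with winding vector $(0, +1)$ on $(S_1, zS_2^{-1})$, or anti-Cauchy-type with $(-1, 0)$; the additional conditions $\Ind_0(d_z) = \pm 1$ concern a point $0$ which does not lie in $T$ in any of the subcases where they appear. An inspection of the ten admissible entries of the table confirms that in each case the winding vector of $d_z$ matches $(\Ind_p(c_z))$ up to the constant $\mathbf{1}$ on $T$, so $[c_z] = [d_z]$ in $H_1(U)$ and the two integrals coincide. The main technical work is concentrated in the winding-number computation for $c_z$, where the orientation of $\PPP$ at $\infty$ has to be tracked carefully against the convention $\Ind_\infty := 0$.
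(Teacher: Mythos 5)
Your argument is correct, and it reorganizes the proof in a genuinely different way from the paper. The paper works in the fully punctured space $\PPP\setminus(S_1\cup zS_2^{-1}\cup\{0\}\cup\{\infty\})$, compares winding numbers there via Proposition~\ref{prop:homologous}, isolates the discrepancy between $c_z$ and $d_z$ as an auxiliary cycle $c'_z$ winding once around $0$, and kills its contribution by the residue theorem using $f_1(0)f_2(\infty)=0$ --- and it carries this out explicitly only in the case $0,\infty\notin S_1$, $0\in S_2$, $\infty\notin S_2$, leaving the other cases to the reader. You instead spend the hypothesis $f_i\in\HH$ up front to extend the integrand holomorphically across whichever of $0,\infty$ lie in $U=\PPP\setminus(S_1\cup zS_2^{-1})$, and then compare $c_z$ and $d_z$ directly in $H_1(U)$ through the exact sequence $0\to\Z\cdot\mathbf{1}\to H^0(T,\Z)\to H_1(U)\to 0$. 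What this buys is a single uniform treatment of all nine admissible entries of Pohlen's table (you say ten; a harmless miscount), with no residue computation and no auxiliary cycle; the price is that you must track the orientation of $\PPP$ at $\infty$ in the winding-vector computation for $-\partial\bar N$, which you do correctly, and your two outcomes $(-1,0)$ and $(0,+1)$ agree with the paper's $\Ind(c_z,\cdot)$ in the case it treats.

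Two points deserve tightening, though neither is a genuine gap. First, the identification of the inverse of $H^0(T,\Z)/\Z\cdot\mathbf{1}\xrightarrow{\sim}H_1(U)$ with the winding vector modulo constants does not follow verbatim from the proof of Proposition~\ref{prop:homologous}: the single-point restriction used there degenerates on $\PPP$, since $H_1(\PPP\setminus\{p\})=0$. You should instead compare the excision triangles for $\C\hookrightarrow\PPP$, or note that the kernel of the surjection $H_1(U\setminus\{\infty\})\to H_1(U)$ is generated by a large circle whose winding vector on $T$ is the constant $\mathbf{1}$ (when $\infty\in T$ this step is vacuous and Proposition~\ref{prop:homologous} applies directly, consistently with the convention $\Ind_\infty=0$). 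Second, your parenthetical that the conditions $\Ind_0(d_z)=\pm1$ concern a point not lying in $T$ is false in several subcases: when $\infty\in S_2$ one has $0\in zS_2^{-1}\subset T$, and in the column where $0\notin\Omega_1$ one has $0\in S_1\subset T$. In those subcases, however, the extra condition is already forced by the Cauchy or anti-Cauchy requirement on $K$ or on $S_1$, so the winding vectors still match and your conclusion stands.
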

\begin{proof}
We treat the case where $0, \infty \notin S_1$ and $0 \in S_2, \infty\notin S_2$ and leave the other ones to the reader. By construction, it is clear that $c_z$ verifies

\[
\Ind(c_z,w) = \begin{cases} 0\,\, &\text{if} \,\,\,\, w\in zS_2^{-1}\cup \{0\}\\ -1 \,\, &\text{if} \,\,\,\, w\in S_1, \end{cases}
\]
Let $c'_z$ be a cycle $\PPP\backslash(S_1 \cup zS_2^{-1} \cup \{0\} \cup \{\infty\})$ such that 
\[
\Ind(c'_z,w) = \begin{cases} 0\,\, &\text{if} \,\,\,\, w\in zS_2^{-1}\cup S_1\\ -1 \,\, &\text{if} \,\,\,\, w=0. \end{cases}
\]
\noindent
Since $d_z$ is $\text{acc}^-$, by Proposition~\ref{prop:homologous}, it is clear that $d_z$ is homologuous to $c_z+c'_z$ in $\PPP\backslash(S_1 \cup zS_2^{-1} \cup \{0\} \cup \{\infty\})$. We then have

\[
\int_{c_z} f_1(\zeta)f_2\left(\frac{z}{\zeta}\right) \frac{d\zeta}{\zeta} = \int_{d_z} f_1(\zeta)f_2\left(\frac{z}{\zeta}\right) \frac{d\zeta}{\zeta}- \int_{c'_z} f_1(\zeta)f_2\left(\frac{z}{\zeta}\right) \frac{d\zeta}{\zeta}.
\]
Moreover, by the residue theorem,

\begin{align*}
-\int_{c'_z} f_1(\zeta)f_2\left(\frac{z}{\zeta}\right) \frac{d\zeta}{\zeta} & = 2i\pi \text{Res}_{\zeta=0} \left(\frac{f_1(\zeta)}{\zeta}f_2\left(\frac{z}{\zeta}\right)\right) = 2i\pi \lim_{\zeta\to 0} \left(f_1(\zeta)f_2\left(\frac{z}{\zeta}\right)\right)\\
& = 2i\pi f_1(0)f_2(\infty) =0.
\end{align*}

\noindent
Hence the conclusion.
\end{proof}

\begin{remark}
Of course, the generalized Hadamard product is no longer commutative if the functions do not vanish at infinity. For example, let $S_1$ and $S_2$ be as in the proof of the previous proposition. Let $f_1 \in \Hol(\PPP\backslash S_1)$ and $f_2 \in \Hol(\PPP\backslash S_2).$ By a similar computation, one sees that

\[
f_1\star f_2 - f_2 \star f_1 = f_1(0)f_2(\infty).
\]
\end{remark}

Despite the lack of commutativity, the generalized Hadamard cycles are more symmetric with respect to $0$ and $\infty$. In the section $5$, we shall explain how one can define a convolution between $1$-forms which have (not necessarily isolated) singularities at $0$ and $\infty$. Generalized Hadamard cycles are key ingredients to compute such a convolution (see also section $6$). Moreover, the commutativity shall eventually be obtained thanks to quotient spaces that naturally occur in this context.  

\section{The holomorphic integration map}

Let $X$ be a complex manifold of complex dimension $d_X$ and $r \in \Z$. Recall that $\CC_{\infty,X}^r$ admits a decomposition in bi-types
\[
\CC_{\infty,X}^r \simeq \bigoplus_{p+q=r} \CC_{\infty,X}^{p,q}
\]
which induces a decomposition of the exterior derivative $d$ as
\[
d = \partial + \overline{\partial},
\]
where
\[
\partial: \CC_{\infty,X}^{p,q} \to \CC_{\infty,X}^{p+1,q}
\quad\text{and}\quad
\overline{\partial}: \CC_{\infty,X}^{p,q} \to \CC_{\infty,X}^{p,q+1}.
\]
Similarly, $\Db_X^{r}$ admits a decomposition in bi-types
\[
\Db_{X}^r \simeq \bigoplus_{p+q=r} \Db_{X}^{p,q}
\]
and an associated decomposition of the distributional exterior derivative. Moreover, for any open subset $U$ of $X$, we have a canonical isomorphism
\[
\Db_X^{r}(U) \simeq \Gamma_c(U,\CC_{\infty,X}^{2d_X-r})'
\]
between the space of complex distributional $r$-forms and the topological dual of the space of infinitely differentiable complex differential $(2d_X-r)$-forms with compact support which induces the similar isomorphism
\[
   \Db_X^{p,q}(U) \simeq \Gamma_c(U,\CC_{\infty,X}^{d_X-p,d_X-q})'.
\]
In the sequel, we denote by $\Omega_{X}^p$ the sheaf of holomorphic differential $p$-forms on $X$ and we set for short $\Omega_X = \Omega_X^{d_X}.$ Of course, $\Omega_{X}^p$ is canonically isomorphic to both the kernel of
\[
  \overline\partial : \CC_{\infty,X}^{p,0} \to \CC_{\infty,X}^{p,1}
\]
and the kernel of
\[
  \overline\partial :\Db_{X}^{p,0} \to \Db_{X}^{p,1}.
\]

The double complex $\CC_{\infty,X}^{\bigcdot,\bigcdot}$ (resp.\ $\Db_X^{\bigcdot,\bigcdot}$) is the infinitely differentiable (resp.\ distributional) Dolbeault complex of $X$. By construction, the associated simple complex is the infinitely differentiable (resp.\ distributional) de~Rham complex $\CC_{\infty,X}^{\bigcdot}$ (resp.\ $\Db_{X}^{\bigcdot}$) of $X$. Moreover, we have the following chains of canonical quasi-isomorphisms :
\[
   \C_{X}\simeq\CC_{\infty,X}^{\bigcdot}\simeq\Db_{X}^{\bigcdot}
   \quad\text{and}\quad
   \Omega_{X}^{p}\simeq\CC_{\infty,X}^{p,\bigcdot}\simeq\Db_{X}^{p,\bigcdot},
\]
which are given by de Rham and Dolbeault lemmas.

\bigskip

Let $f : X \to Y$ be a holomorphic map from $X$ to a complex manifold $Y$ of complex dimension $d_Y$ and let $V$ be an arbitrary open subset of $Y$. It  follows from the holomorphy of $f$ that the pullback
\[
   f^*:\CC_{\infty,Y}^{r}(V) \to \CC_{\infty,X}^{r}(f^{-1}(V))
\]
sends $\CC_{\infty,Y}^{p,q}(V)$ into $\CC_{\infty,X}^{p,q}(f^{-1}(V))$ if $p+q=r$. In particular,
\[
\partial(f^*\omega) = f^*(\partial \omega)
\quad\text{and}\quad
\overline{\partial}(f^*\omega) = f^*(\overline{\partial} \omega)
\]
for all $\omega\in\CC_{\infty,Y}^{p,q}(V)$. By topological duality, it follows that there are canonical pushforward morphisms
\[
   \int_f : \Gamma_{f-\text{proper}}(f^{-1}(V),\Db_{Y}^{2d_Y-r}) \to \Gamma(V,\Db_{Y}^{2d_X-r})
\]
and
\[
   \int_f :\Gamma_{f-\text{proper}}(f^{-1}(V),\Db_{Y}^{d_Y-p,d_Y-q}) \to \Gamma(V,\Db_{Y}^{d_X-p,d_X-q})
\]
between distributional forms with $f$-proper support on $f^{-1}(V)$ and distributional forms on $V$ and that these morphisms commute with $\partial$ and $\overline{\partial}$. In particular, we get a morphism of double complexes of sheaves of the form
\[
\int_f : f_!\Db_X^{\bigcdot+d_X,\bigcdot+d_X} \to \Db_Y^{\bigcdot+d_Y,\bigcdot+d_Y}.
\]
Moreover, if $f$ is a surjective submersion, one can show that the pushforward of a distibutional form associated to an infinitely differentiable form with $f$-proper support is itself associated to an infinitely differentiable form which can be computed by integration over the fibers of $f$. This shows that, in this case, the preceding morphism factors through a morphism of the form
\[
\int_f : f_!\CC_{\infty,X}^{\bigcdot+d_X,\bigcdot+d_X} \to \CC_{\infty,Y}^{\bigcdot+d_Y,\bigcdot+d_Y}.
\]

Thanks to the quasi-isomorphisms
\[
\Omega_{X}^{p+d_X}\simeq\Db_{X}^{p+d_X,\bigcdot}
\quad\text{and}\quad
\Omega_{Y}^{p+d_Y}\simeq\Db_{Y}^{p+d_Y,\bigcdot},
\]
this gives us a morphism
\[
\int_f : {\RR}f_!\Omega_{X}^{p+d_X}[d_X] \to \Omega_{Y}^{p+d_Y}[d_Y]
\]
in the derived category for each $p\in \Z.$ In the particular case where $p=0$, we get the morphism
\[
\int_f : {\RR}f_!\Omega_{X}[d_X] \to \Omega_{Y}[d_Y]
\]
which is usually called \emph{the holomorphic integration map along the fibers of $f$} (see e.g. \cite[p.\ 129]{Kash90}). Note that, if $g : Y \to Z$ is another holomorphic map between complex manifolds, then the well known relation $(g\circ f)^*=f^*\circ g^*$ entails that $\int_{g \circ f} =\int_g \circ \int_f$.

\section{Holomorphic cohomological convolution}

\label{hccclg}

\begin{definition}\label{def:convolvability}
Let $(G,\mu)$ be a locally compact complex Lie group of complex dimension $n$. Two closed subsets $S_1$ and $S_2$ of $G$ are said to be \emph{convolvable} if $S_1 \times S_2$ is $\mu$-proper, i.e. if
\[
   (S_1 \times S_2) \cap \mu^{-1}(K)
\]
is a compact subset of $G\times G$ for any compact subset $K$ of $G$. 
\end{definition}
\begin{remark}
A proper map on a locally compact topological space is universally closed, in particular closed (see e.g. \cite{Bour66}). Hence, if $S_1$ and $S_2$ are convolvable closed subsets of $G$, then $\mu|_{S_1 \times S_2}$ is a proper map and $S_1+S_2 = \mu|_{S_1 \times S_2}(S_1 \times S_2)$ is closed.
\end{remark}

\begin{definition}\index{Convolvable distributional forms}\index{Convolution of distributional forms}\label{def:convdist}
Two distributional $2n$-forms $u_1$ and $u_2$ of $G$ are \emph{convolvable} if the support $S_1$ of $u_1$ and the support $S_2$ of $u_2$ are convolvable. In that case, the convolution product of $u_1$ and $u_2$ is a distributional $2n$-form on $G$ defined by $$u_1 \star u_2 = \int_{\mu}(u_1 \boxtimes u_2) := \int_{\mu}(p_1^*u_1 \wedge p_2^*u_2),$$ where $p_1, p_2 : G \times G \to G$ are the two canonical projections.
\end{definition}

\begin{remark}
By choosing a Haar form $\nu$ on $G$, one can define the convolution product of two distributions by means of the isomorphism $\Db_{G} \simeq \Db_G^{2n}$ given by $\nu$ (see e.g. \cite{Dieu80}).
\end{remark}

\begin{remark}
If we define
\[
\phi : G\times G \to G \times G \quad\text{and}\quad \psi:G\times G \to G \times G
\]
by setting $\phi(g_1,g_2)=(g_1,\mu(g_1,g_2))$ and $\psi(g_1,g_2)=(g_1,\mu(g_1^{-1}, g_2))$, we see that $\phi$ and $\psi$ are reciprocal biholomorphic bijections and that the diagram
\[
\xymatrix{
G\times G \ar@{->}[rr]_{\phi}^{\sim} \ar@{->}[rd]_{\mu} &   &  G\times G\ar@{->}[ld]^{p_2} \\
                                                          & G
}
\]
is commutative. This shows in particular that $\mu$ is a surjective submersion and that the preceding procedure allows us also to define the convolution product of infinitely differentiable forms.
\end{remark}

Let $S_1$ and $S_2$ be two convolvable closed subsets of $G$. By construction, the convolution of distributions on $G$ is the composition of the external product of distributions
\[
   \Gamma_{S_1}(G,\Db_G^{2n})\otimes\Gamma_{S_2}(G,\Db_G^{2n}) \to  \Gamma_{S_1\times S_2}(G\times G,\Db_{G\times G}^{4n})
\]
and the map
\[
   \int_{\mu} : \Gamma_{S_1\times S_2}(G\times G,\Db_{G\times G}^{4n}) \to \Gamma_{\mu(S_1 \times S_2)}(G,\Db_G^{2n})
\]
induced by the integration map along the fibers of $\mu$
\[
   \int_{\mu} : \Gamma_{\mu-\text{proper}}(G\times G,\Db_{G\times G}^{4n})\to \Gamma(G,\Db_G^{2n})
\]
and the fact that $S_1$ and $S_2$ are convolvable. It is thus natural to define the convolution of cohomology classes of holomorphic forms on $G$ as follows :

\begin{definition}\index{Holomorphic cohomological convolution}\label{def:holcohconv}
Let $S_1, S_2$ be two convolvable closed subsets of $G$. Consider the external product morphisms
\[
{\RR}\Gamma_{S_1}(G,\Omega_G^{p+n})[n] \otimes {\RR}\Gamma_{S_2}(G,\Omega_G^{q+n})[n] \to {\RR}\Gamma_{S_1 \times S_2}(G \times G,\Omega_{G\times G}^{p+q+2n})[2n]
\]
and the morphisms
\[
\int_{\mu} : {\RR}\Gamma_{S_1 \times S_2}(G \times G,\Omega_{G\times G}^{p+q+2n})[2n] \to {\RR}\Gamma_{\mu(S_1\times S_2)}(G,\Omega_{G}^{p+q+n})[n].
\]
induced by the holomorphic integration map and the fact that $S_1\times S_2$ is $\mu$-proper. By composition, these morphisms
give derived category morphisms
\[
\star_{(G,\mu)} : {\RR}\Gamma_{S_1}(G,\Omega_G^{p+n})[n] \otimes {\RR}\Gamma_{S_2}(G,\Omega_G^{q+n})[n] \rightarrow {\RR}\Gamma_{\mu(S_1\times S_2)}(G,\Omega_{G}^{p+q+n})[n],
\]
that we call the \emph{holomorphic convolution morphisms of $G$}. Going to cohomology groups, these morphisms give rise to the morphisms
\[
\star_{(G,\mu)} : H_{S_1}^{r+n}(G,\Omega_G^{p+n}) \otimes H_{S_2}^{s+n}(G,\Omega_G^{q+n}) \rightarrow H_{\mu(S_1 \times S_2)}^{r+s+n}(G,\Omega_{G}^{p+q+n}),
\]
that we call the \emph{holomorphic cohomological convolution morphisms of $G$}.
\end{definition}

\begin{remark}
Consider the diagram 
\[
\hspace*{-1.35cm}
\xymatrix@=3em{
& H_{S_1}^{n}(G,\Omega_G) \otimes H_{S_2}^{n}(G,\Omega_G) \ar[r] & H_{\mu(S_1 \times S_2)}^{n}(G,\Omega_{G})\\
& \Gamma_{S_1}(G,\Db_G^{2n})\otimes\Gamma_{S_2}(G,\Db_G^{2n}) \ar[r]\ar[u] &\Gamma_{\mu(S_1 \times S_2)}(G,\Db_G^{2n}) \ar[u] }
\]
where the vertical arrows are given by the Dolbeault complex of $\Omega_G$ and the top (resp the bottom) horizontal arrow is given by the holomorphic cohomological morphism of $G$ with $p=q=r=s=0$ (resp. the convolution product of distributions). Obviously, by the definitions, this diagram is commutative. This remark will allow to perform explicit computations in the next section. 
\end{remark}

\section{Multiplicative convolution on $\C^*$}

In this section, we will consider the case where the group $G$ is the group $\C^*$ formed by the set of non-zero complex numbers endowed with the complex multiplication (noted as a concatenation). We will assume that $S_1, S_2$ are convolvable proper closed subsets of $\C^*$ (remark that this means that $S_1 \cap KS_2^{-1}$ is compact for any compact subset $K$ of $\C^*$) such that $S_1S_2$ is also a proper subset of $\C^*$ and we will show how to compute the holomorphic cohomological convolution morphism
\begin{equation}\label{equ:holconv}
\star : H_{S_1}^{1}(\C^*,\Omega_{\C^*}) \otimes H_{S_2}^{1}(\C^*,\Omega_{\C^*}) \rightarrow H_{S_1 S_2}^{1}(\C^*,\Omega_{\C^*})
\end{equation}
by means of path integral formulas. 

\begin{proposition}
Let $S$ be a proper closed subset of $\C^*$, then there is a canonical isomorphism
\[
        H^r_S(\C^*,\Omega_{\C^*}) \simeq
        \begin{cases}
                \Omega(\C^*\setminus S)/\Omega(\C^*) & \text{if }r=1,\\
                0                                            & \text{otherwise.}
        \end{cases}
\]
\end{proposition}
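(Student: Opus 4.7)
The plan is to use the long exact sequence of local cohomology associated with the pair $(\C^*, S)$ and exploit the fact that both $\C^*$ and $\C^* \setminus S$ are Stein.

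First I would write down the excision distinguished triangle
\[
{\RR}\Gamma_S(\C^*, \Omega_{\C^*}) \to {\RR}\Gamma(\C^*, \Omega_{\C^*}) \to {\RR}\Gamma(\C^* \setminus S, \Omega_{\C^*}) \xrightarrow{+1}
\]
and pass to the associated long exact sequence in cohomology. The key observation is that $\C^*$ is open in $\C$ and, since $S$ is closed in $\C^*$, the complement $\C^* \setminus S$ is also open in $\C$. Every open subset of $\C$ is a Stein manifold, so Cartan's Theorem B yields $H^r(\C^*, \Omega_{\C^*}) = 0$ and $H^r(\C^* \setminus S, \Omega_{\C^*}) = 0$ for all $r \geq 1$.

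Feeding this vanishing into the long exact sequence, the tail
\[
0 \to H^0_S(\C^*, \Omega_{\C^*}) \to \Omega(\C^*) \to \Omega(\C^* \setminus S) \to H^1_S(\C^*, \Omega_{\C^*}) \to 0
\]
collapses everything above degree one to zero (since for $r \geq 2$ we sandwich $H^r_S$ between two vanishing terms). Because $S$ is a proper closed subset, $\C^* \setminus S$ is a non-empty open subset of $\C^*$, so the restriction map $\Omega(\C^*) \to \Omega(\C^* \setminus S)$ is injective by the identity principle. Thus $H^0_S = 0$ and exactness at the right end gives the desired identification $H^1_S(\C^*, \Omega_{\C^*}) \simeq \Omega(\C^* \setminus S)/\Omega(\C^*)$.

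There is no real obstacle here: the argument is routine once one invokes Cartan's Theorem B. The only mild subtlety worth checking is that the natural map $\Omega(\C^* \setminus S) \to H^1_S(\C^*, \Omega_{\C^*})$ arising from the connecting morphism of the triangle really does descend to the quotient by $\Omega(\C^*)$, but this is immediate from the exact sequence above.
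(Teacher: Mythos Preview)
Your proof is correct and follows exactly the approach the paper intends: the paper's entire proof is the single sentence ``Any open subset of $\C$ is a Stein manifold,'' leaving the long exact sequence argument, Cartan's Theorem~B, and the injectivity of restriction as routine consequences that you have spelled out.
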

\begin{proof}
Any open subset of $\C$ is a Stein manifold.
\end{proof}

Thanks to this proposition, one can see that (\ref{equ:holconv}) can be interpreted as a bilinear map $$\star : \Omega(\C^*\backslash S_1)/\Omega(\C^*) \times \Omega(\C^*\backslash S_2)/\Omega(\C^*) \to \Omega(\C^*\backslash S_1S_2)/\Omega(\C^*).$$ Now, let $\omega_1\in\Omega(\C^*\setminus S_1)$ and $\omega_2\in\Omega(\C^*\setminus S_2)$ be two given holomorphic forms. Ideally, we would like to obtain a formula of the form
\[
        [\omega_1] \star [\omega_2] = [\omega]
\]
where $\omega$ is a holomorphic form on $\C^*\setminus (S_1S_2)$ which can be computed from $\omega_1$ and $\omega_2$ by some path integral.

\bigskip

It is in general not possible to find such a nice formula. However, we will show that for any relatively compact open subset $U$ of $\C^*$ and any open neighbourhood $V$ of $S_1S_2$ in $\C^*$, there is a holomorphic form $\omega$ on $U\setminus \overline{V}$ which can be computed from $\omega_1$ and $\omega_2$ by some path integral and which is such that
\[
        [\omega]\in \Omega(U \backslash \overline{V})/\Omega(U) \simeq H^1_{\overline{V}\cap U}(U,\Omega_{\C^*})
\]
coincides with the image of $[\omega_1] \star [\omega_2]$ by the canonical restriction morphism
\[
        H_{S_1 S_2}^{1}(\C^*,\Omega_{\C^*})\to H^1_{\overline{V}\cap U}(U,\Omega_{\C^*}).
\]
Thanks to the following lemma, this is in fact sufficient to completely compute $[\omega_1] \star [\omega_2]$.

\begin{lemma}
Let $S$ be a closed subset of $\C^*$. Then
\[
        H^1_S(\C^*,\Omega_{\C^*}) \simeq \varprojlim_{U\in\mathcal{U}_{\text{rc}}, V\in\mathcal{V}_S} H^1_{\overline{V}\cap U}(U,\Omega_{\C^*})
\]
where $\mathcal{U}_{\text{rc}}$ denotes the set of relatively compact open subsets of $\C^*$ ordered by $\subset$ and $\mathcal{V}_S$ denotes the set of open neighbourhoods of $S$ in $\C^*$ ordered by $\supset$.
\end{lemma}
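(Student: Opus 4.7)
The plan is to reduce the statement to a sequential projective limit and then control the derived limit term by a Mittag-Leffler argument. First, since both $U$ and $U \setminus \overline{V}$ are open subsets of $\C$ and hence Stein, the same argument as in the preceding proposition (applied to the Stein manifold $U$ in place of $\C^*$) yields a canonical short exact sequence
\[
0 \to \Omega(U) \to \Omega(U \setminus \overline{V}) \to H^1_{\overline{V} \cap U}(U, \Omega_{\C^*}) \to 0.
\]
This sequence is functorial in $(U, V) \in \mathcal{U}_{\text{rc}} \times \mathcal{V}_S$: enlarging $U$ and shrinking $V$ induce morphisms of short exact sequences via restriction, into which the canonical restriction morphisms $H^1_S(\C^*, \Omega_{\C^*}) \to H^1_{\overline{V} \cap U}(U, \Omega_{\C^*})$ are easily seen to fit.

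Next, I would choose a cofinal sequence $(U_n, V_n)_{n \geq 1}$ in the product system: concretely, the annuli $U_n = \{z \in \C : 1/n < |z| < n\}$ together with any decreasing family $V_n$ of open neighbourhoods of $S$ in $\C^*$ satisfying $\bigcap_n \overline{V_n} = S$ (such a family exists because $\C^*$ is metrizable, hence normal). Since cofinal subsystems compute the same $\varprojlim$ and $R^1\varprojlim$, applying the six-term exact sequence of inverse limits to the short exact sequences above gives
\[
0 \to \varprojlim_n \Omega(U_n) \to \varprojlim_n \Omega(U_n \setminus \overline{V_n}) \to \varprojlim_n H^1_{\overline{V_n} \cap U_n}(U_n, \Omega_{\C^*}) \to R^1\varprojlim_n \Omega(U_n).
\]
Using that holomorphic forms are a sheaf and that $U_n \nearrow \C^*$ and $U_n \setminus \overline{V_n} \nearrow \C^* \setminus S$, the first two terms identify with $\Omega(\C^*)$ and $\Omega(\C^* \setminus S)$ respectively.

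The main obstacle is the vanishing of $R^1\varprojlim_n \Omega(U_n)$. Here I would use that each $\Omega(U_n)$ is a Fr\'echet space under the compact-open topology and that the annular exhaustion $(U_n)$ is Runge in $\C$ (each component of $\C \setminus U_n$ contains a component of $\C \setminus U_{n+1}$), so Runge's approximation theorem ensures that every restriction map $\Omega(U_{n+1}) \to \Omega(U_n)$ has dense image. The Mittag-Leffler criterion for projective systems of Fr\'echet spaces then gives $R^1\varprojlim_n \Omega(U_n) = 0$, and the exact sequence collapses to a canonical isomorphism
\[
\Omega(\C^* \setminus S)/\Omega(\C^*) \xrightarrow{\sim} \varprojlim_n H^1_{\overline{V_n} \cap U_n}(U_n, \Omega_{\C^*}).
\]
Combining this with the identification of the left-hand side from the preceding proposition and tracking the naturality of the whole construction, the resulting map is precisely the one induced by the canonical restriction morphisms, which yields the desired isomorphism.
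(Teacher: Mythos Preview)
Your overall strategy---identify each $H^1_{\overline V\cap U}(U,\Omega_{\C^*})$ with a quotient of spaces of holomorphic forms, pass to a projective limit, and kill the $R^1\varprojlim$ obstruction via the Mittag--Leffler criterion for Fr\'echet spaces together with Runge approximation---is exactly the mechanism behind the paper's one-line appeal to the Mittag--Leffler theorem, and your density argument for the restrictions $\Omega(U_{n+1})\to\Omega(U_n)$ is correct.

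There is, however, a genuine gap: the sequence $(U_n,V_n)$ is in general \emph{not} cofinal in $\mathcal{U}_{\text{rc}}\times\mathcal{V}_S$, because a non-compact closed $S\subset\C^*$ need not admit a countable fundamental system of neighbourhoods. For instance, take $S=\{2,3,4,\dots\}$ and $V_n=\{z:d(z,S)<1/n\}$; then $\bigcap_n\overline{V_n}=S$, yet the open neighbourhood $V=\bigcup_{k\ge2}D(k,1/k^2)$ of $S$ contains no $V_n$. The condition $\bigcap_n\overline{V_n}=S$ is strictly weaker than cofinality in $\mathcal{V}_S$, so your identification of the full limit with the sequential one is unjustified as written.

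The fix is to iterate the limit rather than diagonalise it. For fixed $U\in\mathcal{U}_{\text{rc}}$ the functor $V\mapsto H^1_{\overline V\cap U}(U,\Omega_{\C^*})$ depends only on the closed set $\overline V\cap U$, and since $\overline U$ is compact the sets $\overline{V_n}\cap U$ \emph{are} cofinal among the $\overline V\cap U$: the compacts $\overline{V_n}\cap\overline U$ decrease to $S\cap\overline U\subset V$, so eventually $\overline{V_n}\cap\overline U\subset V$. Thus the inner limit reduces to a sequence and yields $\varprojlim_V H^1_{\overline V\cap U}(U,\Omega_{\C^*})\simeq H^1_{S\cap U}(U,\Omega_{\C^*})$. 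The remaining limit over $\mathcal{U}_{\text{rc}}$ does have the cofinal sequence $(U_n)$, and your Runge/Fr\'echet Mittag--Leffler argument then applies verbatim to finish the proof.
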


\begin{proof}
This follows from the Mittag-Leffler theorem for projective systems (see e.g. Proposition $2.7.1$ in \cite{Kash90}).
\end{proof}

To be able to specify the kind of path integral we need, let us first introduce the following definition :

\begin{definition}\index{Relative Hadamard cycle}
Let $F$ and $G$ be two closed subsets of $\C^*$ which have a compact intersection and let $W$ be an open neighbourhood of $F\cap G$.
A \emph{relative Hadamard cycle for $F$ with respect to $G$ in $W$} is a relative 1-cycle
\[
        c\in Z_1(W\setminus F,(W\setminus F)\cap(W\setminus G))
\]
such that its class
\[
        [c]\in H_1(W\setminus F,(W\setminus F)\cap(W\setminus G))
\]
is the image of the fundamental class
\[
        \alpha_{W, F \cap G} \in H_2(W,W\setminus (F\cap G))
\]
by the Mayer-Vietoris morphism
\[
        H_2(W,W\setminus (F\cap G)) \to H_1(W\setminus F,(W\setminus F)\cap(W\setminus G))
\]
associated with the decomposition
\[
        (W,W\setminus (F\cap G)) = ((W\setminus F)\cup W,(W\setminus F)\cup(W\setminus G)).
\]
\end{definition}

\begin{center}
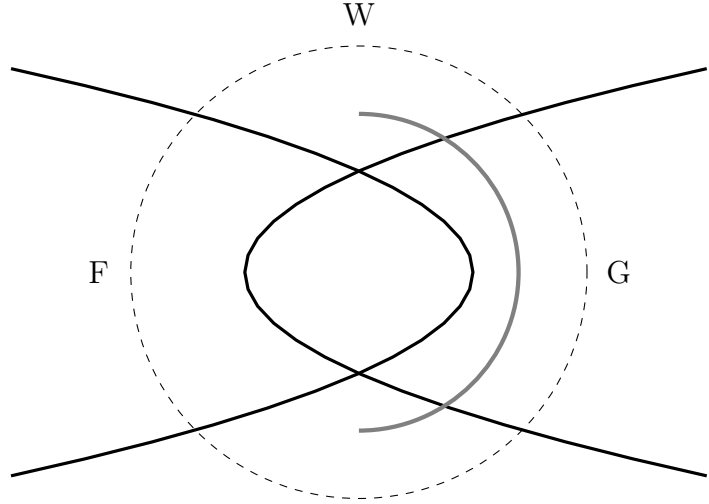

\begin{tikzpicture}[scale=0.60]

\draw[dashed] (0,0) circle (5);
\draw[very thick, rotate around={-90:(0,0)}, domain = -4.5:4.5]  plot (\x, 0.5*\x*\x-2.5);
\draw[very thick, rotate around={-90:(0,0)}, domain = -4.5:4.5]  plot (\x, -0.5*\x*\x+2.5);
\draw[ultra thick,gray] (0,-3.5) arc (-90:90:3.5);
\draw (0,5.7) node [scale=1] {W};
\draw (5.7,0) node [scale=1] {G};
\draw (-5.7,0) node [scale=1] {F};
\end{tikzpicture}
\captionof{figure}{In grey, a relative Hadamard cycle for $F$ with respect to $G$ in $W$.}
\end{center}

\bigskip

With this definition at hand, we can now state the main result of this section.

\begin{theorem}\label{thm:cstarconvol}
Let $S_1$ and $S_2$ be two convolvable proper closed subsets of $\C^*$ such that $S_1S_2\neq\C^*$ and let us assume that $\omega_1=f_1 dz$ and $\omega_2=f_2 dz$ with $f_1\in\mathcal{O}(\C^*\setminus S_1), f_2\in\mathcal{O}(\C^*\setminus S_2)$. Fix a relatively compact open subset $U$ of $\C^*$ and an open neighbourhood $V$ of $S_1S_2$ in $\C^*$. Then, the image of
\[
        [\omega_1]\star[\omega_2] \in \Omega(\C^* \backslash S_1 S_2)/\Omega(\C^*) \simeq H_{S_1 S_2}^{1}(\C^*,\Omega_{\C^*})
\]
in
\[
         \Omega(U\setminus \overline{V})/\Omega(U) \simeq H^1_{\overline{V}\cap U}(U,\Omega_{\C^*})
\]
is the class of the form $\omega=fdz\in\Omega(U\setminus \overline{V})$ where
\[
        f(z) = \int_c f_1(\zeta) f_2\left(\frac{z}{\zeta}\right) \frac{d\zeta}{\zeta}
\]
and $c$ is a relative Hadamard cycle for $S_1$ with respect to $\overline{U}S_2^{-1}$ in $\C^*\setminus (\overline{U}\setminus V)S_2^{-1}$.
\end{theorem}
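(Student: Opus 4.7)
The plan is to compute $[\omega_1]\star[\omega_2]$ via the Dolbeault resolution and then identify the resulting distributional convolution with the contour integral of the theorem using Stokes' theorem. The key input is the commuting diagram in the remark following Definition~\ref{def:holcohconv}, which ensures that the holomorphic cohomological convolution is computed, at the level of Dolbeault representatives, by the convolution of distributional $(1,1)$-forms.

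First, I choose smooth cutoffs $\chi_1, \chi_2$ on $\C^*$ with $\chi_i \equiv 1$ on a small open neighbourhood $A_i$ of $S_i$ and $\supp(\chi_i) \subset B_i$, where $B_i$ is an open neighbourhood of $S_i$ chosen small enough so that $B_1 B_2 \subset V$, $B_1 \cap \overline{U} B_2^{-1}$ is relatively compact in $\C^*$, and $S_1 \cap zB_2^{-1} = zS_2^{-1} \cap B_1 = \emptyset$ for all $z \in U \backslash \overline{V}$. Such choices exist because $S_1, S_2$ are convolvable and $V$ is an open neighbourhood of $S_1 S_2$. The smooth $(1,1)$-forms
\[
T_i := \overline\partial\chi_i \wedge f_i\,dz = -\overline\partial\bigl((1-\chi_i) f_i\,dz\bigr)
\]
are supported in $B_i \backslash A_i$ and represent, up to a sign, the image of $[\omega_i]$ in $H^1_{B_i}(\C^*, \Omega_{\C^*})$. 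By the commuting diagram mentioned above, together with the inclusion $B_1 B_2 \subset V$, the image of $[\omega_1] \star [\omega_2]$ in $H^1_{\overline{V} \cap U}(U, \Omega_{\C^*})$ is represented by the distributional convolution $T_1 \star T_2$ restricted to $U$.

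Next I compute $T_1 \star T_2$ explicitly. Using the biholomorphism $\phi(\zeta_1, \zeta_2) = (\zeta_1, \zeta_1 \zeta_2)$ from Section~\ref{hccclg} to identify $\mu$ with $p_2$, and the associated change of variables $\zeta = \zeta_1$, $z = \zeta_1 \zeta_2$, a direct Jacobian computation gives
\[
T_1 \star T_2 = \Bigl(\int_{\C^*} (\partial_{\bar\zeta}\chi_1)(\zeta)\,(\partial_{\bar w}\chi_2)(z/\zeta)\, f_1(\zeta)\, f_2(z/\zeta)\, \frac{d\bar\zeta \wedge d\zeta}{|\zeta|^2}\Bigr)\, d\bar z \wedge dz.
\]
Using the chain-rule identity $(\partial_{\bar w}\chi_2)(z/\zeta) = \bar\zeta \cdot \partial_{\bar z}[\chi_2(z/\zeta)]$ and pulling $\partial_{\bar z}$ outside the integral, one gets $T_1 \star T_2 = \overline\partial(G\,dz)$, where
\[
G(z) = \int_{\C^*} (\partial_{\bar\zeta}\chi_1)(\zeta)\, \chi_2(z/\zeta)\, f_1(\zeta)\, f_2(z/\zeta)\, \frac{d\bar\zeta \wedge d\zeta}{\zeta}.
\]
Because $T_1 \star T_2$ has support in $\overline{V} \cap U$, the smooth form $G\,dz$ is holomorphic on $U \backslash \overline{V}$, and its class in $\Omega(U \backslash \overline{V})/\Omega(U)$ coincides, via the Dolbeault isomorphism $\Omega(U\backslash\overline{V})/\Omega(U) \simeq H^1_{\overline{V}\cap U}(U,\Omega_{\C^*})$, with the image of $[\omega_1] \star [\omega_2]$.

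Finally, I need to identify $G$ with the path integral $f$ of the theorem, modulo $\Omega(U)$. By taking the cutoffs $\chi_1, \chi_2$ to be approximations of characteristic functions of open neighbourhoods $U_1 \supset S_1$ and $U_2 \supset S_2$ and passing to the distributional limit---or equivalently, by applying Stokes' theorem to the compactly supported smooth $1$-form $\chi_1(\zeta)\chi_2(z/\zeta) f_1(\zeta) f_2(z/\zeta)\, d\zeta/\zeta$ and rewriting the result---the area integral $G(z)$ reduces to a contour integral of the form $\int_{c_z} f_1(\zeta) f_2(z/\zeta)\, d\zeta/\zeta$, where $c_z$ is the oriented 1-chain $\partial U_1 \cap zU_2^{-1}$. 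The main obstacle of the proof is then the precise topological verification that this 1-chain is a relative Hadamard cycle for $S_1$ with respect to $\overline{U}S_2^{-1}$ in $\C^* \backslash (\overline{U} \backslash V)S_2^{-1}$, i.e.\ that its class in $H_1(W\backslash F, (W\backslash F) \cap (W\backslash G))$ is the Mayer--Vietoris image of the fundamental class $\alpha_{W, F\cap G}$. This step requires a careful orientation argument and an explicit verification, at the chain level, of the Mayer--Vietoris connecting morphism appearing in Definition~\ref{def:genhadcycle}.
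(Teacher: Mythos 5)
Your overall strategy coincides with the paper's: represent $[\omega_i]$ by the Dolbeault coboundary of a cut-off extension, compute the distributional convolution explicitly after identifying $\mu$ with $p_2$ via $\phi$, find a $\overline\partial$-antiderivative in $z$, and convert the area integral into a contour integral by Stokes. Two problems remain, one small and one serious. The small one: your antiderivative $G$ involves $\chi_2(z/\zeta)f_2(z/\zeta)$, which is undefined (hence not smooth in $z$) whenever $z/\zeta\in S_2$, e.g.\ for $z\in V\cap U$ and $\zeta\in\supp\overline\partial\chi_1$; the correct choice is the globally smooth extension $(1-\chi_2)f_2$ (the paper's $\underline{f}_2$), so that $G\,dz$ is actually a smooth form on all of $U$ with $\overline\partial(G\,dz)=T_1\star T_2$ there. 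This is fixable but as written the representative of the class in $\Omega(U\setminus\overline V)/\Omega(U)$ is not a well-defined form on $U$.

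The serious problem is that the step you yourself call ``the main obstacle'' --- showing that the contour produced by Stokes is a relative Hadamard cycle for $S_1$ with respect to $\overline{U}S_2^{-1}$ in $\C^*\setminus(\overline{U}\setminus V)S_2^{-1}$ --- is not carried out, and the route you sketch for it does not work as stated. You propose the $1$-chain $\partial U_1\cap zU_2^{-1}$: for an arbitrary proper closed $S_1\subset\C^*$ the topological boundary of a neighbourhood $U_1$ need not be a singular $1$-cycle at all, and, more importantly, this chain depends on $z$, whereas the theorem asserts the formula with a \emph{single} cycle $c$ valid for all $z\in U\setminus\overline V$ (this uniformity is also what makes holomorphy of $f$ immediate). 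The paper avoids both issues by a purely chain-level construction: take a smooth $2$-chain $c$ representing $\alpha_{\C^*,K}$ with $K=\underline{S}_1\cap\overline{U}\underline{S}_2^{-1}$, use barycentric subdivision to split $c=c_1+c_2$ subordinate to the cover by $\C^*\setminus\underline{S}_1$ and $\C^*\setminus(\overline{U}\setminus V)\underline{S}_2^{-1}$ and $\partial c=c_1'+c_2'$ subordinate to the cover by $\C^*\setminus\underline{S}_1$ and $\C^*\setminus\overline{U}\underline{S}_2^{-1}$, and set $c_3=\partial c_2-c_2'$. The Stokes computation lands exactly on $\int_{c_3}$ (the $c_2'$ contribution being holomorphic on $U$, hence trivial modulo $\Omega(U)$), and $c_3$ is a relative Hadamard cycle essentially by construction, since $c\mapsto\partial c_2-c_2'$ is precisely the chain-level description of the Mayer--Vietoris connecting morphism in the definition. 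Finally, you never address the independence of the result on the choice of relative Hadamard cycle, which is needed since the statement allows $c$ to be \emph{any} such cycle; this requires checking that the integral over a chain supported in $(\C^*\setminus S_1)\cap(\C^*\setminus\overline{U}S_2^{-1})$ and over an exact chain contribute only elements of $\Omega(U)$.
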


\begin{lemma}\label{lem:convolnbh}
Let $S_1$ and $S_2$ be convolvable closed subsets of $\C^*$ and let $\mathcal{W}$ be a fundamental system of compact neighbourhoods of $1$ in $\C^*$.
Then
\begin{enumerate}
\item The set $S_1^W=W S_1$ (resp.\ $S_2^W=W S_2$, $S_1^W S_2^W=W^2 S_1 S_2$) is a closed neighbourhood of $S_1$ (resp.\ $S_2$, $S_1 S_2$) in $\C^*$ for any $W\in\mathcal{W}$.
\item The closed subsets $S_1^W$ et $S_2^W$ are convolvable in $\C^*$ for any $W\in\mathcal{W}$.
\item One has $\bigcap_{W\in\mathcal{W}}S_1^W=S_1$, $\bigcap_{W\in\mathcal{W}}S_2^W=S_2$, and $\bigcap_{W\in\mathcal{W}}S_1^WS_2^W=S_1S_2$.
\item
In particular, if $S_1$ and $S_2$ are proper convolvable closed subsets of $\C^*$ such that $S_1S_2\neq\C^*$, if $U$ is a relatively compact open subset of $\C^*$ and if $V$ is an open neighbourhood of $S_1S_2$ in $\C^*$, then there is $W\in\mathcal{W}$ such that $S_1^W$ and $S_2^W$ are convolvable proper closed subsets of $\C^*$ such that $S_1^W S_2^W \neq\C^*$ and $S_1^W S_2^W\cap\overline{U}\subset V$.
\end{enumerate}
\end{lemma}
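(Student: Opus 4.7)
My plan is to verify the four assertions in order, each reducing to a standard compactness argument in the topological group $\C^*$.

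For part (1), I would use the fact that in a Hausdorff topological group, the product of a compact set and a closed set is closed. Concretely, the map $(w,s)\mapsto ws$ from $W\times S_i$ to $\C^*$ is proper: the preimage of a compact $K$ is a closed subset of $W\times (W^{-1}K\cap S_i)$, and the latter is compact since $W$ and $W^{-1}K$ are. Hence $WS_i$ is closed. It is a neighborhood of $S_i$ because, choosing any open $W^{\circ}\subset W$ with $1\in W^{\circ}$, the open set $W^{\circ}s\subset WS_i$ contains $s$ for every $s\in S_i$. The identity $S_1^W S_2^W = W^2 S_1 S_2$ follows from commutativity of $\C^*$, and the claim about $S_1 S_2$ uses the remark after Definition~\ref{def:convolvability}, which ensures $S_1 S_2$ is closed, so the same argument applied with $W^2$ in place of $W$ applies verbatim.

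For part (2), given a compact $K\subset \C^*$, any point of $(S_1^W\times S_2^W)\cap \mu^{-1}(K)$ has the form $(w_1 t_1,w_2 t_2)$ with $w_i\in W$, $t_i\in S_i$; the constraint $w_1 t_1 w_2 t_2\in K$ forces $t_1 t_2\in W^{-2}K$, a compact set. By convolvability of $(S_1,S_2)$, the set $(S_1\times S_2)\cap \mu^{-1}(W^{-2}K)$ is compact, and $(S_1^W\times S_2^W)\cap \mu^{-1}(K)$ is contained in its image under multiplication by pairs from $W\times W$, hence sits inside a compact set; being closed by part~(1), it is itself compact.

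For part (3), the forward inclusions are immediate since $1\in W$. For the reverse, given $x\notin S_i$, take an open neighborhood $N$ of $x$ disjoint from the closed set $S_i$; continuity of $w\mapsto w^{-1}x$ at $1$ yields a neighborhood $W_0$ of $1$ with $W_0^{-1}x\subset N$, and any $W\in\mathcal{W}$ with $W\subset W_0$ gives $x\notin WS_i$. The analogous argument for $S_1 S_2$ (closed) uses the continuity of $w\mapsto w^{-2}x$ at $1$.

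For part (4), the conditions on $W$ are: (a) $x_i\notin WS_i$ for chosen witnesses $x_i\notin S_i$, ensuring $S_i^W$ is proper; (b) $x_0\notin W^2 S_1 S_2$ for a chosen $x_0\notin S_1 S_2$, ensuring $S_1^W S_2^W$ is proper; and (c) $W^2 S_1 S_2\cap \overline{U}\subset V$. Conditions~(a) and~(b) are secured by part~(3). For~(c), observe that $\{(W^2 S_1 S_2)\cap \overline{U}\}_{W\in\mathcal{W}}$ is a family of compact subsets of $\overline{U}$, directed by reverse inclusion, with intersection $S_1 S_2\cap \overline{U}\subset V$; hence $\{((W^2 S_1 S_2)\cap \overline{U})\setminus V\}$ is a filtered family of compact sets with empty intersection, so some member is already empty, yielding a $W_{\mathrm{top}}\in\mathcal{W}$ fulfilling~(c). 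Since $\mathcal{W}$ is directed by reverse inclusion, one can pick $W\in\mathcal{W}$ contained in $W_1\cap W_2\cap W_0\cap W_{\mathrm{top}}$; this $W$ satisfies all four conditions simultaneously. The main obstacle is precisely this bookkeeping in~(4), combining the four simultaneous constraints through the directedness of $\mathcal{W}$, though each individual step is elementary.
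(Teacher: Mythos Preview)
Your proof is correct and follows essentially the same line as the paper's. Parts (1)--(3) match almost verbatim: closedness of $KF$ for $K$ compact and $F$ closed, the inclusion $S_1^W\cap K(S_2^W)^{-1}\subset W(S_1\cap KW^{-2}S_2^{-1})$ (you phrase this via $\mu$-properness rather than the equivalent $S_1\cap KS_2^{-1}$ criterion, but it is the same inclusion), and the neighbourhood-of-$1$ argument for the intersections. For (4) the paper only spells out the compactness argument for the inclusion $S_1^WS_2^W\cap\overline{U}\subset V$ and leaves the properness conditions implicit; you are simply more explicit in tracking all four constraints and combining them via the directedness of $\mathcal{W}$, which is a cosmetic rather than a substantive difference.
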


\begin{proof}
(1) This follows from the fact that $F K$ is closed in $\C^*$ if $F$ (resp.\ $K$) is closed (resp.\ compact) in $\C^*$ and from the fact that
$(zW)_{W\in\mathcal{W}}$ is a fundamental system of neighbourhoods of $z\in\C^*$.

\noindent
(2) This follows from the inclusion
\[
        S_1^W\cap K (S_2^W)^{-1}
        =
        W S_1 \cap K W^{-1}S_2^{-1}
        \subset
        W (S_1 \cap K W^{-2}S_2^{-1})
\]
which is satisfied for any compact subset $K$ of $\C^*$.

\noindent
(3) This is clear since for any closed subset $F$ of $\C^*$ and any $z\not\in F$ there is $W\in\mathcal{W}$ such that $zW^{-1}\cap F=\emptyset$.

\noindent
(4) By contradiction, assume that $$S_1^WS_2^W \cap \overline{U} \cap (\C^* \setminus V) \neq \emptyset$$ for all $W \in \mathcal{W}.$ Then, by compactness, $$\bigcap_{W\in \mathcal{W}}( S_1^WS_2^W \cap \overline{U} \cap (\C^* \setminus V)) = S_1S_2 \cap \overline{U} \cap (\C^* \setminus V) \neq \emptyset,$$ but this contradicts the fact that $S_1^W S_2^W\cap\overline{U}\subset V$. 
\end{proof}

\begin{lemma}\label{lem:representation}
Let $S$ be a proper closed subset of $\C^*$ and let $\omega\in\Omega(\C^*\setminus S)$. Assume that $\omega$ admits an infinitely differentiable extension to $\C^*$ and denote by $\underline{\omega}$ such an extension. Then $[\omega]$, seen as an element of $H^1_S(\C^*,\Omega_{\C^*})$, is the image of
\[
[\overline{\partial}\underline{\omega}]\in H^1(\Gamma_S(\C^*,\CC_{\infty,\C^*}^{1,\bigcdot}))
\]
by the canonical morphism obtained by applying $H^1$ to the composition in the derived category of the canonical morphism
\[
\Gamma_S(\C^*,\CC_{\infty,\C^*}^{1,\bigcdot}) \to {\RR}\Gamma_S(\C^*,\CC_{\infty,\C^*}^{1,\bigcdot})
\]
and the inverse of the canonical isomorphism
\[
{\RR}\Gamma_S(\C^*,\Omega_{\C^*})\xrightarrow{\sim} {\RR}\Gamma_S(\C^*,\CC_{\infty,\C^*}^{1,\bigcdot}).
\]
\end{lemma}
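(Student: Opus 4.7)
The plan is to realise both classes as the image of $\omega$ under a single connecting morphism obtained from the excision long exact sequence applied to a Dolbeault resolution of $\Omega_{\C^*}$.

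First I would exploit the softness of the sheaves $\CC_{\infty,\C^*}^{1,q}$ (they admit $\CC_\infty$ partitions of unity) to produce, after taking global sections, the short exact sequence of complexes
\[
0 \to \Gamma_S(\C^*,\CC_{\infty,\C^*}^{1,\bigcdot}) \to \Gamma(\C^*,\CC_{\infty,\C^*}^{1,\bigcdot}) \to \Gamma(\C^*\setminus S,\CC_{\infty,\C^*}^{1,\bigcdot}) \to 0.
\]
Combining the Dolbeault quasi-isomorphism $\Omega_{\C^*}\xrightarrow{\sim}\CC_{\infty,\C^*}^{1,\bigcdot}$ with the Stein vanishing $H^{k}(\C^*,\Omega_{\C^*})=H^{k}(\C^*\setminus S,\Omega_{\C^*})=0$ for $k\geq 1$ (recorded in the preceding proposition), the cohomology of the two rightmost complexes is concentrated in degree zero, and the associated long exact sequence collapses to
\[
0 \to \Omega(\C^*) \to \Omega(\C^*\setminus S) \xrightarrow{\delta} H^1\bigl(\Gamma_S(\C^*,\CC_{\infty,\C^*}^{1,\bigcdot})\bigr) \to 0.
\]

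Next I would observe that the same softness (together with the fact that $\Gamma_S$ applied to a soft sheaf is $\Gamma$-acyclic) produces the chain of canonical isomorphisms
\[
\Gamma_S(\C^*,\CC_{\infty,\C^*}^{1,\bigcdot}) \xrightarrow{\sim} {\RR}\Gamma_S(\C^*,\CC_{\infty,\C^*}^{1,\bigcdot}) \xleftarrow{\sim} {\RR}\Gamma_S(\C^*,\Omega_{\C^*}),
\]
which on $H^1$ yields exactly the comparison morphism described in the statement. Comparing the underived triangle $\Gamma_S\to\Gamma\to\Gamma(\C^*\setminus S,\bigcdot)$ with its derived counterpart applied to $\Omega_{\C^*}$ gives a morphism of distinguished triangles; a standard diagram chase then shows that $\delta$ is, under these identifications, the inverse of the boundary isomorphism $H^1_S(\C^*,\Omega_{\C^*})\xrightarrow{\sim}\Omega(\C^*\setminus S)/\Omega(\C^*)$ of the previous proposition.

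To conclude I would compute $\delta([\omega])$ by the usual snake-lemma recipe: lift $\omega$ to $\underline{\omega}\in\CC_{\infty,\C^*}^{1,0}(\C^*)$ (which is precisely the hypothesised smooth extension) and apply $\overline{\partial}$; since $\overline{\partial}\omega=0$ on $\C^*\setminus S$, the resulting form $\overline{\partial}\underline{\omega}\in\CC_{\infty,\C^*}^{1,1}(\C^*)$ is supported in $S$ and represents a cocycle of $\Gamma_S(\C^*,\CC_{\infty,\C^*}^{1,\bigcdot})$, whose class is by construction $\delta([\omega])$. Independence from the choice of extension is automatic: two such extensions differ by an element of $\Gamma_S(\C^*,\CC_{\infty,\C^*}^{1,0})$, whose $\overline{\partial}$ is a coboundary in $\Gamma_S(\C^*,\CC_{\infty,\C^*}^{1,\bigcdot})$. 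The main piece of work — more bookkeeping than obstacle — is to verify that the orientation of the quasi-isomorphism $\RR\Gamma_S(\Omega_{\C^*})\xrightarrow{\sim}\RR\Gamma_S(\CC_{\infty,\C^*}^{1,\bigcdot})$ prescribed in the statement matches the one produced by our construction, which reduces to commutativity of two elementary squares in the derived category.
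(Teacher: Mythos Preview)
Your argument has a genuine gap: soft (or fine) sheaves are \emph{not} $\Gamma_S$-acyclic for a closed subset $S$. Softness guarantees acyclicity for $\Gamma$ and $\Gamma_c$, but not for the functor of sections supported in a closed set. Concretely, the restriction $\Gamma(\C^*,\CC_{\infty,\C^*}^{1,q}) \to \Gamma(\C^*\setminus S,\CC_{\infty,\C^*}^{1,q})$ is not surjective---a smooth form on $\C^*\setminus S$ may blow up along $S$---so your displayed sequence is not short exact, and your claimed quasi-isomorphism $\Gamma_S(\C^*,\CC_{\infty,\C^*}^{1,\bigcdot}) \xrightarrow{\sim} {\RR}\Gamma_S(\C^*,\CC_{\infty,\C^*}^{1,\bigcdot})$ is false. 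This is precisely why the lemma carries the hypothesis that $\omega$ admit a smooth extension: the comparison map on $H^1$ is only injective, not surjective, and the hypothesis singles out those $[\omega]$ that lie in its image.

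Without the short exact sequence your connecting morphism $\delta$ is not defined on all of $\Omega(\C^*\setminus S)$, and your identification of the comparison map with the inverse of the excision boundary collapses. The paper avoids this by modelling $\RR\Gamma_S(\C^*,\Omega_{\C^*})$ as the shifted mapping cone $M(\rho_S)[-1]$ of the restriction $\rho_S\colon\CC_{\infty,\C^*}^{1,\bigcdot}(\C^*)\to\CC_{\infty,\C^*}^{1,\bigcdot}(\C^*\setminus S)$; this is legitimate because softness does make $\CC_{\infty,\C^*}^{1,\bigcdot}$ compute $\RR\Gamma$ on both $\C^*$ and $\C^*\setminus S$. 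In that explicit three-term complex the class $[\omega]$ is represented by $(0,\omega)$, the image of $[\overline{\partial}\underline{\omega}]$ is represented by $(\overline{\partial}\underline{\omega},0)$, and one checks by hand that $\underline{\omega}$ is a coboundary between them. Your snake-lemma manipulation is the same computation in spirit, but it has to be carried out in the cone, where both cocycles live, rather than in a kernel complex that does not have the required properties.
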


\begin{proof}
It follows from the distinguished triangle
\[
{\RR}\Gamma_S(\C^*,\Omega_{\C^*})
\rightarrow {\RR}\Gamma(\C^*,\Omega_{\C^*})
\rightarrow {\RR}\Gamma(\C^*\setminus S,\Omega_{\C^*})
\overset{+1}\rightarrow
\]
that ${\RR}\Gamma_S(\C^*,\Omega_{\C^*})$ is canonically isomorphic to the mapping cone $M(\rho_S)$ of the restriction morphism
\[
\rho_S : \CC_{\infty,\C^*}^{1,\bigcdot}(\C^*) \to \CC_{\infty,\C^*}^{1,\bigcdot}(\C^*\setminus S)
\]
shifted by $-1$. We know that $M[\rho_S][-1]$ is a complex concentrated in degrees $0$, $1$ and $2$ of the form
\[
\CC_{\infty,\C^*}^{1,0}(\C^*)
\to
\CC_{\infty,\C^*}^{1,1}(\C^*) \oplus \CC_{\infty,\C^*}^{1,0}(\C^*\setminus S)
\to
\CC_{\infty,\C^*}^{1,1}(\C^*\setminus S)
\]
where the differentials in degree $0$ and $1$ are given by the matrices
\[
\begin{pmatrix}
\overline{\partial}\\
-\rho_S        
\end{pmatrix}
\quad\text{and}\quad
\begin{pmatrix}
-\rho_S        & -\overline{\partial}
\end{pmatrix}
\]
What we have to show is that
\[
\begin{pmatrix}
\overline{\partial}\underline{\omega}\\
0
\end{pmatrix}
\quad\text{and}\quad
\begin{pmatrix}
0\\
\omega
\end{pmatrix}
\] are two 1-cycles of this complex which are in the same cohomology class. This is clear since
\[
\begin{pmatrix}
\overline{\partial}\\
-\rho_S        
\end{pmatrix}\underline{\omega}
+
\begin{pmatrix}
0\\
\omega
\end{pmatrix}
=
\begin{pmatrix}
\overline{\partial}\underline{\omega}\\
0
\end{pmatrix}.
\]
\end{proof}

\begin{proof}[Proof of Theorem~\ref{thm:cstarconvol}]
Let $U$ and $V$ be as in the statement of the theorem. Thanks to Lemma~\ref{lem:convolnbh}, we know that it is possible to find a closed neighbourhood $\underline{S}_1$ of $S_1$ and a closed neighbourhood $\underline{S}_2$ of $S_2$ in $\C^*$ such that $\underline{S}_1$ and $\underline{S}_2$ are convolvable and
\[
        \overline{U}\cap\underline{S}_1\underline{S}_2\subset V.
\]
Let $\underline{f}_1$ (resp.\ $\underline{f}_2$) be an infinitely differentiable function on $\C^*$ which coincides with $f_1$ (resp.\ $f_2$) on $\C^*\setminus \underline{S}_1$ (resp.\ $\C^*\setminus \underline{S}_2$) and set
\[
        \underline{\omega}_1 = \underline{f}_1(z)\,dz\quad\text{and}\quad\underline{\omega}_2 = \underline{f}_2(z)\,dz.
\]
It follows from Lemma~\ref{lem:representation} that the image of
\[
        [\omega_1] \in \Omega(\C^*\backslash S_1)/\Omega(\C^*) \simeq H^1_{S_1}(\C^*,\Omega_{\C^*})
\]
by the canonical morphism
\[
        H^1_{S_1}(\C^*,\Omega_{\C^*}) \to H^1_{\underline{S}_1}(\C^*,\Omega_{\C^*})
\]
is the same as the image of
\[
        [\overline{\partial}\underline{\omega}_1] \in H^1(\Gamma_{\underline{S}_1}(\C^*,\mathcal{C}^{(1,\bigcdot)}_{\infty,\C^*}))
\]
by the canonical morphism
\[
         H^1(\Gamma_{\underline{S}_1}(\C^*,\mathcal{C}^{(1,\bigcdot)}_{\infty,\C^*}))
         \to
         H^1_{\underline{S}_1}(\C^*,\Omega_{\C^*})
\]
considered in this lemma. A similar conclusion is true for the image of $$[\omega_2] \in \Omega(\C^*\backslash S_2)/\Omega(\C^*) \simeq H^1_{S_2}(\C^*,\Omega_{\C^*})$$ in $ H^1_{\underline{S}_2}(\C^*,\Omega_{\C^*})$. Therefore, the image of
\[
        [\omega_1]\star[\omega_2] \in \Omega(\C^*\backslash S_1 S_2)/\Omega(\C^*) \simeq H^1_{S_1S_2}(\C^*,\Omega_{\C^*})
\]
in $H^1_{\underline{S}_1\underline{S}_2}(\C^*,\Omega_{\C^*})$ is the same as the image of $[\overline{\partial}\underline{\omega}_1\star\overline{\partial}\underline{\omega}_2]$ by the canonical morphism
\[
         H^1(\Gamma_{\underline{S}_1\underline{S}_2}(\C^*,\mathcal{C}^{(1,\bigcdot)}_{\infty,\C^*}))
         \to
         H^1_{\underline{S}_1\underline{S}_2}(\C^*,\Omega_{\C^*}).
\]

\bigskip

Let us note $p_1, p_2 : \C^* \times \C^* \to \C^*$ the two canonical projections and consider the commutative diagram
\[
\xymatrix{
\C^*\times \C^* \ar@<.5ex>[rr]^{\phi} \ar@{->}[rd]_{\mu} &      &  \C^*\times\C^* \ar@{->}[ld]^{p_2} \ar@<.5ex>[ll]^{\psi} \\
                                                    & \C^* &
}
\]
where $\phi(z_1,z_2)=(z_1,z_1 z_2)$ and $\psi(\zeta,z)=(\zeta,z/\zeta)$. Since $\phi\circ\psi=\text{id}=\psi\circ\phi$, we have
\[
        \int_{\mu}=\int_{p_2}\circ\int_{\phi}=\int_{p_2}\circ\:\psi^*.
\]
Therefore,
\begin{align*}
\overline{\partial}\underline{\omega}_1\star\overline{\partial}\underline{\omega}_2
&= \int_{\mu}(\overline{\partial}\underline{\omega}_1\boxtimes\overline{\partial}\underline{\omega}_2)\\
&= \int_{p_2}(\psi^*(p_1^*\overline{\partial}\underline{\omega}_1\wedge p_2^*\overline{\partial}\underline{\omega}_2))\\
&= \int_{p_2}(p_1^*\overline{\partial}\underline{\omega}_1\wedge h^*\overline{\partial}\underline{\omega}_2)),
\end{align*}
where $h(\zeta,z)=z/\zeta$. Since
\[
        \overline{\partial}\underline{\omega}_1 = \frac{\partial\underline{f}_1}{\partial\overline{z}}(z)d\overline{z}\wedge dz
        \quad\text{and}\quad
        \overline{\partial}\underline{\omega}_2 = \frac{\partial\underline{f}_2}{\partial\overline{z}}(z)d\overline{z}\wedge dz,
\]
we have
\begin{align*}
h^*\overline{\partial}\underline{\omega}_2
&=\frac{\partial\underline{f}_2}{\partial\overline{z}}\left(\frac{z}{\zeta}\right)
  d\left(\frac{\overline{z}}{\overline{\zeta}}\right)\wedge d\left(\frac{z}{\zeta}\right)\\
&=\frac{\partial\underline{f}_2}{\partial\overline{z}}\left(\frac{z}{\zeta}\right)
  \frac{\overline{\zeta} d\overline{z}-\overline{z} d\overline{\zeta}}{\overline{\zeta}^2} \wedge
  \frac{\zeta dz-z d\zeta}{\zeta^2}
\end{align*}
and
\[
        p_1^*\overline{\partial}\underline{\omega}_1\wedge h^*\overline{\partial}\underline{\omega}_2
        = \frac{\partial\underline{f}_1}{\partial\overline{z}}(\zeta)\frac{\partial\underline{f}_2}{\partial\overline{z}}\left(\frac{z}{\zeta}\right)
          \frac{d\overline{\zeta}}{\overline{\zeta}}\wedge\frac{d\zeta}{\zeta}\wedge d\overline{z}\wedge dz.
\]
Therefore,
\[
        \overline{\partial}\underline{\omega}_1 \star \overline{\partial}\underline{\omega}_2
        = \left(\int_{\C^*}
                \frac{\partial\underline{f}_1}{\partial\overline{z}}(\zeta)\frac{\partial\underline{f}_2}{\partial\overline{z}}\left(\frac{z}{\zeta}\right)
                \frac{d\overline{\zeta}}{\overline{\zeta}}\wedge\frac{d\zeta}{\zeta}
          \right) d\overline{z}\wedge dz.
\]
Since $\underline{f}_1$ coincides with $f_1$ on $\C^*\setminus\underline{S}_1$, one has
\[
        \supp\left(\zeta\mapsto\frac{\partial\underline{f}_1}{\partial\overline{z}}(\zeta)\right)
        \subset
        \underline{S}_1.
\]
Similarly, one has
\[
        \supp\left(\zeta\mapsto\frac{\partial\underline{f}_2}{\partial\overline{z}}\left(\frac{z}{\zeta}\right)\right)
        \subset
        z\underline{S}_2^{-1}.
\]
Hence,
\[
        \zeta
        \mapsto
        \frac{\partial\underline{f}_1}{\partial\overline{z}}(\zeta)\frac{\partial\underline{f}_2}{\partial\overline{z}}\left(\frac{z}{\zeta}\right)
\]
is an infinitely differentiable function on $\C^*$ supported by $\underline{S}_1\cap z\underline{S}_2^{-1}$ which is a compact subset of $\C^*$.

\bigskip

Since $U$ is a relatively compact open subset of $\C^*$ and $\underline{S}_1$ and $\underline{S}_2$ are convolvable closed subsets of $\C^*$,
\[
        K=\underline{S}_1\cap\overline{U}\underline{S}_2^{-1}
\]
is a compact subset of $\C^*$. Let $c$ be a singular infinitely differentiable $2$-chain of $\C^*$ such that
\[
        [c]\in H_2(\C^*,\C^*\setminus K)
\]
is the relative orientation class $\alpha_{\C^*, K}$, which is the image of the orientation class $\alpha_{\C^*}$ by the canonical morphism
\[
        {}^{\BM}\!H_2(\C^*)\to  H_2(\C^*,\C^*\setminus K).
\]
Then, on $U$, one has
\[
        \overline{\partial}\underline{\omega}_1 \star \overline{\partial}\underline{\omega}_2
        = \left(\int_{c}
                \frac{\partial\underline{f}_1}{\partial\overline{z}}(\zeta)\frac{\partial\underline{f}_2}{\partial\overline{z}}\left(\frac{z}{\zeta}\right)
                \frac{d\overline{\zeta}}{\overline{\zeta}}\wedge\frac{d\zeta}{\zeta}
          \right) d\overline{z}\wedge dz,        
\]
since the integrated form is supported by $\underline{S}_1\cap z\underline{S}_2^{-1}\subset K$ for any $z\in U$. Moreover, the function $\underline{f}_2$ is infinitely differentiable on $\C^*$ and the chain $c$ is supported by a compact subset of $\C^*$. Thus, the function
\[
        f : z \mapsto \int_{c}
                \frac{\partial\underline{f}_1}{\partial\overline{z}}(\zeta)\underline{f}_2\left(\frac{z}{\zeta}\right)
                d\overline{\zeta}\wedge\frac{d\zeta}{\zeta}
\]
is infinitely differentiable on $\C^*$ and
\[
        \frac{\partial f}{\partial\overline{z}}(z)
        =\int_{c}
                \frac{\partial\underline{f}_1}{\partial\overline{z}}(\zeta)\frac{\partial\underline{f}_2}{\partial\overline{z}}\left(\frac{z}{\zeta}\right)
                \frac{d\overline{\zeta}}{\overline{\zeta}}\wedge\frac{d\zeta}{\zeta}
\]
Therefore, on $U$, one has
\[
        \overline{\partial}\underline{\omega}_1 \star \overline{\partial}\underline{\omega}_2
        =\overline{\partial}\omega
\]
where $\omega=f(z)dz$. Since $\supp(\overline{\partial}\underline{\omega}_1 \star \overline{\partial}\underline{\omega}_2)\subset\underline{S}_1\underline{S}_2$, the function $f$ is holomorphic on $U\setminus\underline{S}_1\underline{S}_2$ and it follows from what precedes that
\[
        ([\omega_1]\star[\omega_2])|_{U} = [\omega|_{U}]
\]
in
\[
        \Omega(U\backslash \underline{S}_1\underline{S}_2)/\Omega(U) \simeq H^1_{(\underline{S}_1\underline{S}_2) \cap U}(U,\Omega_{\C^*}).
\]

\bigskip

Let us now show how to compute $[\omega|_{U}]$ in $\Omega(U\setminus \overline{V})/\Omega(U)$ by means of $f_1$ and $f_2$ alone. Since $V$ is an open neighbourhood of $\underline{S}_1\underline{S}_2$,
\[
        \underline{S}_1\cap(\overline{U}\setminus V)\underline{S}_2^{-1} = \emptyset.
\]
Therefore,
\[
        \C^*=(\C^*\setminus\underline{S}_1)\cup \left(\C^*\setminus\left((\overline{U}\setminus V)\underline{S}_2^{-1}\right)\right)
\]
and, replacing if necessary $c$ by a barycentric subdivision, we may assume that $c=c_1+c_2$ where
\[
        \supp c_1 \subset \C^*\setminus\underline{S}_1
        \quad\text{and}\quad
        \supp c_2 \subset \C^*\setminus\left((\overline{U}\setminus V)\underline{S}_2^{-1}\right).
\]
Since $\supp\frac{\partial\underline{f}_1}{\partial\overline{z}}\subset\underline{S}_1$, it is then clear that
\[
        f(z)=\int_{c_2}
                \frac{\partial\underline{f}_1}{\partial\overline{z}}(\zeta)\underline{f}_2\left(\frac{z}{\zeta}\right)
                d\overline{\zeta}\wedge\frac{d\zeta}{\zeta}.
\]
Moreover, for any $z\in\overline{U}\setminus V$ one has
\[
        \C^*\setminus z\underline{S}_2^{-1}
        \supset \C^*\setminus\left((\overline{U}\setminus V)\underline{S}_2^{-1}\right)
        \supset \supp c_2
\]
and since the function $\zeta\mapsto \underline{f}_2(z/\zeta)$ is holomorphic on $\C^*\setminus z\underline{S}_2^{-1}$, it follows that
\begin{align*}
f(z)
&=\int_{c_2}
  \frac{\partial}{\partial\overline{\zeta}}\left(\underline{f}_1(\zeta)\underline{f}_2\left(\frac{z}{\zeta}\right)\frac{1}{\zeta}\right)
  d\overline{\zeta}\wedge d\zeta\\
&=\int_{\partial c_2}\underline{f}_1(\zeta)\underline{f}_2\left(\frac{z}{\zeta}\right)\frac{d\zeta}{\zeta}.
\end{align*}
By construction,
\[
        \supp(\partial c) \subset \C^*\setminus K = (\C^*\setminus\underline{S}_1) \cup (\C^*\setminus\overline{U}\underline{S}_2^{-1}).
\]
Up to replacing $c$ by a one of its barycentric subdivisions, we may thus assume that $\partial c = c'_1+c'_2$ where $\supp c'_1\subset \C^*\setminus\underline{S}_1$ and $\supp c'_2\subset \C^*\setminus\overline{U}\underline{S}_2^{-1}$. Since $$\partial c_1+\partial c_2=\partial c= c'_1+ c'_2,$$ there is a chain $c_3$ such that
\[
        \partial c_2 -c'_2 = c_3 = c'_1 - \partial c_1.
\]
Since $\supp c'_2\subset \C^*\setminus\overline{U}\underline{S}_2^{-1}$, the function
\[
        z\mapsto \int_{c'_2}\underline{f}_1(\zeta)\underline{f}_2\left(\frac{z}{\zeta}\right)\frac{d\zeta}{\zeta}
\]
is clearly holomorphic on $U$. Hence, the image of $[\omega|_{U}]$ in $\Omega(U\backslash \overline{V})/\Omega(U)$ is $[g(z)dz]$ where $g$ is the holomorphic function on $U\setminus\overline{V}$ defined by setting
\[
        g(z)=\int_{c_3}\underline{f}_1(\zeta)\underline{f}_2\left(\frac{z}{\zeta}\right)\frac{d\zeta}{\zeta}.
\]
Since
\[
        \supp(\partial c_2 -c'_2)
        \subset \left(\C^*\setminus\left((\overline{U}\setminus V)\underline{S}_2^{-1}\right)\right) \cup \left(\C^*\setminus\overline{U}\underline{S}_2^{-1}\right)
        = \C^*\setminus\left((\overline{U}\setminus V)\underline{S}_2^{-1}\right)
\]
and
\[
        \supp(c'_1 - \partial c_1)
        \subset (\C^*\setminus\underline{S}_1) \cup (\C^*\setminus\underline{S}_1)
        = \C^*\setminus\underline{S}_1,
\]
it is clear that
\[
        \supp c_3 \subset (\C^*\setminus\underline{S}_1) \cap \left(\C^*\setminus\left((\overline{U}\setminus V)\underline{S}_2^{-1}\right)\right).
\]
Therefore, we have in fact
\[
        g(z)=\int_{c_3}f_1(\zeta)f_2\left(\frac{z}{\zeta}\right)\frac{d\zeta}{\zeta}
\]
for any $z\in U\setminus\overline{V}$. Moreover, since $\partial c_3 = \partial c'_1 = - \partial c'_2$, it is clear that $$\supp\partial c_3\subset (\C^*\setminus\underline{S}_1) \cap (\C^*\setminus\overline{U}\underline{S}_2^{-1}).$$
\noindent
So,
\[
        c_3 \in Z_1(\left(\C^*\setminus\underline{S}_1\right)\cap\left(\C^*\setminus\left((\overline{U}\setminus V)\underline{S}_2^{-1}\right)\right),
                            \C^*\setminus\underline{S}_1\cap\C^*\setminus\overline{U}\underline{S}_2^{-1})
\]
and it follows by the construction of the chain $c_3$ that its class in
\[
        H_1(\left(\C^*\setminus\underline{S}_1\right)\cap\left(\C^*\setminus\left((\overline{U}\setminus V)\underline{S}_2^{-1}\right)\right),
                \C^*\setminus\underline{S}_1\cap\C^*\setminus\overline{U}\underline{S}_2^{-1})
\]
coïncides with the image of orientation class of $\C^*$ by the following sequence of canonical morphisms
\begin{align*}
{}^{\BM}\!H_2(\C^*)
&\to H_2(\C^*\setminus(\overline{U}\setminus V)\underline{S}_2^{-1},
         (\C^*\setminus(\overline{U}\setminus V)\underline{S}_2^{-1})\setminus (\underline{S}_1\cap \overline{U}\underline{S}_2^{-1}))\\
&\to H_1((\C^*\setminus(\overline{U}\setminus V)\underline{S}_2^{-1})\setminus (\underline{S}_1\cap \overline{U}\underline{S}_2^{-1}))\\
&= H_1(((\C^*\setminus\underline{S}_1) \cap (\C^*\setminus(\overline{U}\setminus V)\underline{S}_2^{-1}))\cup (\C^*\setminus\overline{U}\underline{S}_2^{-1}))\\
&\to H_1(((\C^*\setminus\underline{S}_1) \cap (\C^*\setminus(\overline{U}\setminus V)\underline{S}_2^{-1}))\cup (\C^*\setminus\overline{U}\underline{S}_2^{-1}),
         \C^*\setminus\overline{U}\underline{S}_2^{-1})\\
&\to H_1((\C^*\setminus\underline{S}_1) \cap (\C^*\setminus(\overline{U}\setminus V)\underline{S}_2^{-1}),
                          (\C^*\setminus\underline{S}_1)\cap (\C^*\setminus\overline{U}\underline{S}_2^{-1})).
\end{align*}

This clearly shows that $c_3$ is a relative Hadamard cycle for $\underline{S}_1$ with respect to $\overline{U}\underline{S}_2^{-1}$ in $\C^*\setminus (\overline{U}\setminus V)\underline{S}_2^{-1}$. Thus, $c_3$ is also a relative Hadamard cycle for $S_1$ with respect to $\overline{U}S_2^{-1}$ in $\C^*\setminus (\overline{U}\setminus V)S_2^{-1}$.

\bigskip
To conclude, it remains to show that if $c'_3$ is another relative Hadamard cycle for $S_1$ with respect to $\overline{U}S_2^{-1}$ in $\C^*\setminus (\overline{U}\setminus V)S_2^{-1}$ and if
\[
        \check{g}(z)=\int_{c'_3}f_1(\zeta)f_2\left(\frac{z}{\zeta}\right)\frac{d\zeta}{\zeta}
\]
for any $z \in U\setminus\overline{V}$, then $[g(z)dz]=[\check{g}(z)dz]$ in $\Omega(U\backslash \overline{V})/\Omega(U)$. For such a $c'_3$, we have $[c_3]=[c'_3]$ in
\[
H_1((\C^*\setminus\underline{S}_1) \cap (\C^*\setminus(\overline{U}\setminus V)\underline{S}_2^{-1}),
                          (\C^*\setminus\underline{S}_1)\cap (\C^*\setminus\overline{U}\underline{S}_2^{-1})).        
\]
Therefore, $c'_3=c_3+c_4+\partial c_5$ where $c_4$ is a $1$-chain of $(\C^*\setminus\underline{S}_1)\cap (\C^*\setminus\overline{U}\underline{S}_2^{-1})$ and $c_5$ is a $2$-chain of $(\C^*\setminus\underline{S}_1) \cap (\C^*\setminus(\overline{U}\setminus V)\underline{S}_2^{-1})$. It follows that the function
\[
        \check{g} : z\mapsto \int_{c'_3}f_1(\zeta)f_2\left(\frac{z}{\zeta}\right)\frac{d\zeta}{\zeta}
\]
is a holomorphic function on $U\setminus\overline{V}$ and that
\[
        \check{g}(z)=g(z)+\int_{c_4}f_1(\zeta)f_2\left(\frac{z}{\zeta}\right)\frac{d\zeta}{\zeta}
\]
on $U\setminus\overline{V}$. Since
\[
        z\mapsto \int_{c_4}f_1(\zeta)f_2\left(\frac{z}{\zeta}\right)\frac{d\zeta}{\zeta}
\]
is clearly holomorphic on $U$, we have $[g(z)dz]=[\check{g}(z)dz]$ in $\Omega(U\backslash \overline{V})/\Omega(U)$ as expected.
\end{proof}

\section{The case of strongly convolvable sets}

It is natural to ask if one can compute the holomorphic cohomological multiplicative convolution on $\C^*$ thanks to a global formula, by adding extra-conditions on $S_1$ and $S_2$. Recalling Definition~\ref{def:closedstareligible}, we are led to introduce the following one :

\begin{definition}
Let $S_1$ and $S_2$ be two convolvable proper closed subsets of $\C^*$ such that $S_1S_2 \neq \C^*$. These two closed sets are said to be \emph{strongly convolvable} if, furthermore, $\overline{S}_1$ and $\overline{S}_2$ are star-eligible, that is to say, if $\overline{S}_1 \times \overline{S}_2 \subset M.$ (Here $\overline{(.)}$ denotes the closure in $\PPP.$)
\end{definition}

\begin{remark}
One can find convolvable subsets of $\C^*$ which are not strongly convolvable. For example, consider
\[
S_1 = \{(2m)! : m \in \N\} \qquad \text{and} \qquad S_2 = \left\{ \frac{1}{(2n+1)!} : n\in \N\right\}.
\]
\end{remark}

We shall now highlight the link with the generalized Hadamard product. Recall Definitions~\ref{def:genhadcycle} and \ref{def:genhadproduct}.

\begin{proposition}\label{prop:strongly}
Let $S_1$ and $S_2$ be two strongly convolvable proper closed subsets of $\C^*.$ Assume that $\omega_1 = f_1 dz$ and $\omega_2 = f_2 dz$ with $f_1 \in \Hol(\C^* \backslash S_1)$ and $f_2 \in \Hol(\C^* \backslash S_2)$. For all $z\in \C^* \backslash S_1 S_2$, let $c_z$ be a generalized Hadamard cycle for $\overline{S}_1$ in $\PPP\backslash (z\overline{S}_2^{-1} \cup (\{0,\infty\}\backslash \overline{S}_1)).$
Then

\[
[\omega_1] \star [\omega_2] = [f dz] \in \Omega(\C^* \backslash S_1 S_2)/\Omega(\C^*),
\]
where 
\[
f(z) = -\int_{c_z} f_1(\zeta) f_2\left(\frac{z}{\zeta}\right) \frac{d\zeta}{\zeta},
\]
for all $z\in \C^*\backslash S_1S_2.$
\end{proposition}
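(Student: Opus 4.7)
\emph{Reduction to a local statement.} My plan is to reduce the global identity to the local formula of Theorem~\ref{thm:cstarconvol} using the Mittag--Leffler projective limit description of $H^1_{S_1S_2}(\C^*,\Omega_{\C^*})$. For fixed relatively compact open $U\subset\C^*$ and fixed open neighbourhood $V$ of $S_1S_2$, it suffices to compare the restrictions of $[f\,dz]$ and $[\omega_1]\star[\omega_2]$ in $\Omega(U\backslash\overline{V})/\Omega(U)$. By Theorem~\ref{thm:cstarconvol}, the second is represented by $g\,dz$ with
\[
g(z) = \int_{c^\ast} f_1(\zeta)\,f_2\!\left(\frac{z}{\zeta}\right)\frac{d\zeta}{\zeta},
\]
where $c^\ast$ is a relative Hadamard cycle for $S_1$ with respect to $\overline{U}S_2^{-1}$ in $W := \C^*\backslash(\overline{U}\backslash V)S_2^{-1}$.

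\emph{Uniform representation of $f$.} I would then apply Lemma~\ref{lem:uniform} to the compact set $K := \overline{U}\backslash V\subset\C^*\backslash S_1S_2$ to obtain a single $1$-cycle $c_K$ in $\C^*\backslash(S_1\cup KS_2^{-1}) = W\backslash S_1$ such that $f(z) = -\int_{c_K} f_1(\zeta) f_2(z/\zeta)\,d\zeta/\zeta$ for every $z\in U\backslash\overline{V}\subset K$. If one can show $[-c_K]=[c^\ast]$ in $H_1\bigl(W\backslash S_1,\,W\backslash(S_1\cup\overline{U}S_2^{-1})\bigr)$, then writing $-c_K-c^\ast=\partial\delta+\epsilon$ with $\delta\in C_2(W\backslash S_1)$ and $\epsilon\in C_1(W\backslash(S_1\cup\overline{U}S_2^{-1}))$, Stokes's theorem eliminates the $\partial\delta$-contribution (for $z\in U\backslash\overline{V}$ one has $zS_2^{-1}\subset\overline{U}S_2^{-1}$, so the integrand is holomorphic in $\zeta$ on $\supp\delta$), leaving $f(z)-g(z) = \int_\epsilon(\cdots)$, which is manifestly holomorphic on all of $U$, whence $[f\,dz]=[g\,dz]$ in $\Omega(U\backslash\overline{V})/\Omega(U)$.

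\emph{The homological identity.} To establish $[-c_K]=[c^\ast]$, I would trace both classes back to a common orientation class. Set $\widetilde X := \PPP\backslash\bigl(K\overline{S}_2^{-1}\cup(\{0,\infty\}\backslash\overline{S}_1)\bigr)$, so that $W = \widetilde X\cap\C^*$ and $\widetilde X\backslash\overline{S}_1 = W\backslash S_1$ (strong convolvability ensures $\overline{S}_1\cap K\overline{S}_2^{-1}=\emptyset$). By construction, $[c_K]$ is the image of $-\alpha_{\widetilde X,\overline{S}_1}$ under the connecting map $\partial\colon H_2(\widetilde X,\widetilde X\backslash\overline{S}_1)\to H_1(W\backslash S_1)$, whereas $[c^\ast]$ is the image of $\alpha_{W,\,S_1\cap\overline{U}S_2^{-1}}$ under the Mayer--Vietoris morphism $\delta_{\mathrm{MV}}$. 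Since $\widetilde X\backslash W\subset\{0,\infty\}$ is disjoint from $S_1\cap\overline{U}S_2^{-1}$, excision yields an isomorphism $H_2(\widetilde X,\widetilde X\backslash(S_1\cap\overline{U}S_2^{-1}))\xrightarrow{\sim} H_2(W,W\backslash(S_1\cap\overline{U}S_2^{-1}))$ under which both $\alpha_{\widetilde X,\overline{S}_1}$ (restricted along $S_1\cap\overline{U}S_2^{-1}\subset\overline{S}_1$) and $\alpha_{W,\,S_1\cap\overline{U}S_2^{-1}}$ project to the same intermediate class.

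\emph{Main obstacle.} The hard part will be to verify, with correct signs, the commutativity of the square
\[
\xymatrix{
H_2(\widetilde X,\widetilde X\backslash\overline{S}_1) \ar[r]^{\partial} \ar[d] & H_1(W\backslash S_1) \ar[d] \\
H_2(W,W\backslash(S_1\cap\overline{U}S_2^{-1})) \ar[r]^{\delta_{\mathrm{MV}}} & H_1(W\backslash S_1,\,W\backslash(S_1\cup\overline{U}S_2^{-1}))
}
\]
whose vertical maps are the restriction-then-excision morphism on the left and the canonical absolute-to-relative morphism on the right. This should be done by a chain-level computation: picking a singular $2$-chain $\gamma\in C_2(W)$ representing the common excised orientation class and splitting $\partial\gamma = a+b$ with $a\in C_1(W\backslash S_1)$ and $b\in C_1(W\backslash\overline{U}S_2^{-1})$, one sees that $a$ simultaneously represents the absolute cycle $\partial\alpha_{\widetilde X,\overline{S}_1}$ in $W\backslash S_1$ and the relative cycle $\delta_{\mathrm{MV}}(\alpha_{W,\,S_1\cap\overline{U}S_2^{-1}})$ in $(W\backslash S_1,\,W\backslash(S_1\cup\overline{U}S_2^{-1}))$. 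Combined with the built-in sign in Definition~\ref{def:genhadcycle}, this gives $[c_K]=-[c^\ast]$, completing the proof.
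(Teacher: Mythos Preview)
Your proposal is correct and follows essentially the same approach as the paper: reduce to the local formula of Theorem~\ref{thm:cstarconvol} via the projective-limit description, then compare the generalized Hadamard cycle with the relative Hadamard cycle by tracing both back to the same orientation class. The paper's proof is terser---it asserts that the image of $[c_z]$ under a chain of canonical maps (passing through Borel--Moore homology) equals $[-c]$, invoking ``a similar argument as in the proof of Lemma~\ref{lem:uniform}''---whereas you uniformize first with $c_K$ (so that the cycle already lives in $W\backslash S_1$, avoiding the Borel--Moore detour) and then spell out the commutative square relating the long-exact-sequence boundary $\partial$ to the Mayer--Vietoris map $\delta_{\mathrm{MV}}$ at the chain level; both routes encode the same comparison of orientation classes, and your explicit sign bookkeeping matches the paper's conclusion.
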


\begin{proof}
Let $U$ be a relatively compact open subset of $\C^*$ and $V$ an open neighbourhood  of $S_1 S_2$ in $\C^*$. Let $c$ be a relative Hadamard cycle for $S_1$ with respect to $\overline{U}S_2^{-1}$ in $\C^* \backslash (\overline{U} \backslash V)S_2^{-1}$. Then, by a similar argument as in the proof of Lemma~\ref{lem:uniform}, it is clear that the image of $[c_z]$ by the sequence of canonical maps

$$
\xymatrix{H_1(\PPP\backslash (\overline{S}_1 \cup z\overline{S}_2^{-1} \cup \{0\} \cup \{\infty\})) = H_1(\C^* \backslash (S_1 \cup zS_2^{-1})) \ar[d]\\
{}^{\BM}\!H_1(\C^* \backslash (S_1 \cup zS_2^{-1})) \ar[d]\\
{}^{\BM}\!H_1((\C^*\setminus S_1) \cap (\C^*\setminus(\overline{U}\setminus V)S_2^{-1}))\ar[d]\\
H_1((\C^*\setminus S_1) \cap (\C^*\setminus(\overline{U}\setminus V)S_2^{-1}),
                          (\C^*\setminus S_1)\cap (\C^*\setminus\overline{U}S_2^{-1}))
}
$$

\noindent
is $[-c]$ for all $z\in \overline{U}\backslash V.$ Hence 

\[
\int_{c} f_1(\zeta) f_2\left(\frac{z}{\zeta}\right) \frac{d\zeta}{\zeta} = -\int_{c_z} f_1(\zeta) f_2\left(\frac{z}{\zeta}\right) \frac{d\zeta}{\zeta}, \quad \forall z \in U \backslash \overline{V}.
\]
Since this argument is valid for all $U$ and all $V$, the conclusion follows from Theorem~\ref{thm:cstarconvol}.
\end{proof}
\noindent
In this context, we set $(f_1\star f_2)(z) = \int_{c_z} f_1(\zeta) f_2\left(\frac{z}{\zeta}\right) \frac{d\zeta}{\zeta}.$ If $f_1 \in \Hol(\PPP \setminus \overline{S}_1)$ and $f_2 \in \Hol(\PPP \setminus \overline{S}_2),$ this really coincides with the generalized Hadamard product.

\begin{remark}
Let $S_1$ and $S_2$ be two strongly convolvable proper closed subsets of $\C^*.$ Let us make an identification $f dz \leftrightarrow -2i\pi f$ between holomorphic $1$-forms and holomorphic functions. Then, by the previous proposition, the holomorphic cohomological convolution morphism 

\[
H^1_{S_1}(\C^*, \Omega_{\C^*}) \otimes H^1_{S_2}(\C^*, \Omega_{\C^*}) \to H^1_{S_1 S_2}(\C^*, \Omega_{\C^*})
\]
can be seen as a bilinear map

\[
\Hol(\C^*\backslash S_1)/\Hol(\C^*) \times \Hol(\C^*\backslash S_2)/\Hol(\C^*) \to \Hol(\C^*\backslash S_1 S_2)/\Hol(\C^*),
\]
which can be computed by 

\[
[f_1]\star [f_2] = [f_1 \star f_2].
\]
\end{remark}

For the following example, we use the notation $D(0,R) = \{z\in \C : |z|<R\}$ with $R>0.$

\begin{example}\label{ex:hadamard}
Let $S = \C^*\backslash D(0,s)$ and $T = \C^*\backslash D(0,t)$ with $s>0$, $t>0$ and let 
\[
f \in \Hol(\C^* \backslash S)= \Hol(D(0,s)\backslash \{0\})\quad\text{and}\quad g \in \Hol(\C^* \backslash T) = \Hol(D(0,t)\backslash \{0\})
\] 
be two holomorphic functions. Then, $S$ and $T$ are strongly convolvable proper closed subsets of $\C^*$ and  we can write $f(z) = \sum_{n=-\infty}^{+\infty} a_n z^n$, $g(z) = \sum_{n=-\infty}^{+\infty} b_n z^n$. Since the polar part of $f$ (resp. $g$) is holomorphic on $\C^*$, we have
$
[f]=\left[\sum_{n=0}^{+\infty} a_n z^n \right]$ in $\Hol(D(0,s)\backslash \{0\})/\Hol(\C^*)$
and
$
[g]=\left[\sum_{n=0}^{+\infty} b_n z^n \right]$ in $\Hol(D(0,t)\backslash \{0\})/\Hol(\C^*).
$
Using the preceding remark, we see that the holomorphic cohomological convolution $[f]\star[g]$ is given by
\[
[f\star g] = \left[\sum_{n=0}^{+\infty} a_n b_n z^n \right],
\]
since the generalized Hadamard product coincides with the usual one in this case.  
\end{example}

Let us now state a trivial proposition :

\begin{proposition}\label{prop:restriction}
Let $S_1$ and $S_2$ be two convolvable closed subsets of $\C^*$ and $S'_1\subset S_1$, $S'_2 \subset S_2$ two closed subsets. Then, $S'_1$ and $S'_2$ are convolvable and the diagram

\[
\xymatrix{ H^1_{S_1}(\C^*,\Omega_{\C^*}) \otimes H^1_{S_2}(\C^*,\Omega_{\C^*}) \ar[r] & H^1_{S_1 S_2}(\C^*,\Omega_{\C^*}) \\ 
H^1_{S'_1}(\C^*,\Omega_{\C^*}) \otimes H^1_{S'_2}(\C^*,\Omega_{\C^*}) \ar[u] \ar[r] & H^1_{S'_1 S'_2}(\C^*,\Omega_{\C^*}) \ar[u] 
} 
\]
where the horizontal arrows are given by the holomorphic cohomological convolution morphisms, is commutative.  
\end{proposition}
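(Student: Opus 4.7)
The proposition breaks into two independent assertions: (a) the closed subsets $S'_1, S'_2$ are still convolvable, and (b) the square commutes. The plan is to dispose of (a) in one paragraph and then reduce (b) to naturality (in the support) of the two building blocks of the holomorphic cohomological convolution of Definition~\ref{def:holcohconv}.

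For (a), I would note that since $S'_i$ is closed in $\C^*$, the set $(S'_1 \times S'_2) \cap \mu^{-1}(K)$ is closed in $\C^* \times \C^*$, and it is contained in $(S_1 \times S_2) \cap \mu^{-1}(K)$, which is compact by hypothesis. A closed subset of a compact set is compact, so $S'_1$ and $S'_2$ are convolvable. In passing, $\mu(S'_1 \times S'_2) = S'_1 S'_2 \subset S_1 S_2$, which ensures that the right vertical arrow of the square, the canonical morphism $H^1_{S'_1 S'_2}(\C^*,\Omega_{\C^*}) \to H^1_{S_1 S_2}(\C^*,\Omega_{\C^*})$ induced by the inclusion $S'_1 S'_2 \subset S_1 S_2$, makes sense.

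For (b), I would recall that by Definition~\ref{def:holcohconv} the convolution morphism $\star_{(\C^*,\mu)}$ is the composite of the external product
\[
{\RR}\Gamma_{T_1}(\C^*,\Omega_{\C^*})[1] \otimes {\RR}\Gamma_{T_2}(\C^*,\Omega_{\C^*})[1] \to {\RR}\Gamma_{T_1 \times T_2}(\C^* \times \C^*, \Omega_{\C^* \times \C^*}^{2})[2]
\]
and the fibre integration morphism
\[
\int_{\mu} : {\RR}\Gamma_{T_1 \times T_2}(\C^* \times \C^*, \Omega_{\C^* \times \C^*}^{2})[2] \to {\RR}\Gamma_{\mu(T_1 \times T_2)}(\C^*, \Omega_{\C^*})[1].
\]
Both pieces are natural with respect to enlargement of the closed support. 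Concretely, for the external product this is because if $u_i$ is supported in $T'_i \subset T_i$, then $u_1 \boxtimes u_2$ is supported in $T'_1 \times T'_2 \subset T_1 \times T_2$, and this assertion about (sheaves of) sections lifts to the derived functors $\mathrm{R}\Gamma_{T_i}$ since the inclusion $\Gamma_{T'_i} \hookrightarrow \Gamma_{T_i}$ is a morphism of left-exact functors. For the fibre integration, the analogous statement is that the pushforward of a distributional form with support in $T'_1 \times T'_2$ has support in $\mu(T'_1 \times T'_2) \subset \mu(T_1 \times T_2)$, which is again manifest at the level of sections and passes to the derived category.

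Stacking the two naturality squares, one obtained from the external product and the other from $\int_\mu$, with $(T_1, T_2) = (S'_1, S'_2)$ on the bottom and $(S_1, S_2)$ on the top, yields precisely the commutativity of the diagram in the statement (after passing to $H^1$). I do not anticipate a real obstacle; the whole content is that each ingredient in the construction of $\star_{(\C^*,\mu)}$ is functorial with respect to the closed subset parametrising the support, and the authors themselves label the proposition as trivial. The only mild care needed is to specify that the derived-category lifts of these naturality squares come from applying the derived functors to the evident morphism of underlying diagrams of sheaves, which is standard.
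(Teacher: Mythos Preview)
Your proposal is correct. The paper gives no proof at all: it introduces the proposition with the phrase ``Let us now state a trivial proposition'' and moves on, so your argument is simply the natural unpacking of what the authors deemed obvious---convolvability of the smaller closed sets by the closed-in-compact observation, and commutativity of the square by naturality in the support of the two ingredients (external product and $\int_\mu$) that make up $\star_{(\C^*,\mu)}$ in Definition~\ref{def:holcohconv}.
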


Example~\ref{ex:hadamard} combined with Proposition~\ref{prop:restriction} allows to compute several other examples.

\begin{example}
Let $S_1 = S_2 = (-\infty,-1].$ The principal determination of the function $z\mapsto\ln(1+z)$ is holomorphic on $\C^* \backslash S_1.$ Moreover, $S_1$ and $S_2$ are strongly convolvable and thus, there is $g\in \Hol(\C^* \backslash [1,+\infty))$ such that

\[
[\ln(1+z)] \star [\ln(1+z)] = [g].
\]
Using the previous results, one has 

\begin{align*}
([\ln(1+z)] \star [\ln(1+z)])|_{D(0,1)} & = [\ln(1+z)|_{D(0,1)}] \star [\ln(1+z)|_{D(0,1)}]\\
& = \left[\sum_{n=1}^{+\infty} \frac{(-1)^{n+1}}{n} z^n \right] \star \left[\sum_{n=1}^{+\infty} \frac{(-1)^{n+1}}{n} z^n \right]\\
& = \left[\sum_{n=1}^{\infty} \frac{z^n}{n^2}\right]\\
& = [\text{Li}_2(z)]|_{D(0,1)},
\end{align*}
where $\text{Li}_2$ is the principal dilogarithm function, holomorphic on $\Hol(\C^* \backslash [1,+\infty))$. Hence, there is $h\in \Hol(\C^*)$ such that 
\[
g|_{D(0,1)}-\text{Li}_2|_{D(0,1)} = h.
\]
By the uniqueness of the analytic continuation, one deduces that $g-\text{Li}_2 =h$ on $\C^* \backslash [1,+\infty)$ and, thus, that 
\[
[\ln(1+z)] \star [\ln(1+z)] = [\text{Li}_2(z)]
\] 
in $\Hol(\C^* \backslash S_1 S_2)/\Hol(\C^*).$
\end{example}

\end{document}